\newcommand{\beq}{\begin{eqnarray*}}
\newcommand{\eeq}{\end{eqnarray*}}
\newtheorem{theorem}{Theorem}[section]
\newtheorem{lemma}[theorem]{Lemma}
\newtheorem{corollary}[theorem]{Corollary}
\newtheorem{remark}[theorem]{Remark}
\newsavebox{\toy}
\savebox{\toy}{\framebox[0.65em]{\rule{0cm}{1ex}}}
\newcommand{\QED}{\usebox{\toy}}
\def\nlni{\par\ifvmode\removelastskip\fi\vskip\baselineskip\noindent}
\newenvironment{proof}{\nlni\begingroup\it Proof.\rm}{
\endgroup\vskip\baselineskip}
\providecommand*{\shuffle}{%
  \mathbin{\mathpalette\shuffle@{}}%
}
\newcommand*{\shuffle@}[2]{%
  \sbox0{$#1\vcenter{}$}%
  \kern .15\ht0 
  \rlap{\vrule height .25\ht0 depth 0pt width 2.5\ht0}%
  \raise.1\ht0\hbox to 2.5\ht0{%
    \vrule height 1.75\ht0 depth -.1\ht0 width .17\ht0 %
    \hfill
    \vrule height 1.75\ht0 depth -.1\ht0 width .17\ht0 %
    \hfill
    \vrule height 1.75\ht0 depth -.1\ht0 width .17\ht0 %
  }%
  \kern .15\ht0 
}
\begin{document}

\setlength{\baselineskip}{15pt}
\title{
Top to random shuffles on colored permutations
}
\author{
Fumihiko Nakano
\thanks{
Mathematical Institute, 
Tohoku University, 
Sendai 980-8578, Japan
e-mail:
fumihiko.nakano.e4@tohoku.ac.jp
}
\and 
Taizo Sadahiro
\thanks{Department of Computer Science, 
Tsuda University, 
2-1-1, Tsuda, Kodaira City, 187-8577, Tokyo, Japan.
e-mail : sadahiro@tsuda.ac.jp}
\and
Tetsuya Sakurai
\thanks{
Mathematical Institute,
Tohoku University,
Sendai 980-8578, Japan
e-mail : 
tetsuya.sakurai.t8@dc.tohoku.ac.jp
}
}
\maketitle
\begin{abstract}
A deck of 
$n$ 
cards are shuffled by repeatedly taking off the top card,  flipping it  with probability
$1/2$, 
and inserting it back into the deck at a random position. 
This process 
can be considered as a Markov chain on the group 
$B_n$ 
of signed permutations.
We show 
that the eigenvalues of the transition probability matrix are 
$0,1/n,2/n,\ldots,(n-1)/n,1$ 
and the multiplicity of the eigenvalue 
$i/n$ 
is equal to the number of the {\em signed} permutation having exactly 
$i$ 
fixed points.
We show 
the similar results hold also for the colored permutations. 
Further, 
we show 
that the mixing time of this Markov chain is 
$n\log n$ 
and exhibits cut off, same as the ordinary 'top to random' shuffles without flipping the cards.
The cut off 
is also analyzed by using the asymptotic formula of the Stirling numbers of the second kind.
\end{abstract}


\section{Introduction}
The {\em top to random shuffle} 
of cards, 
which is a Markov chain on the symmetric group, has long been studied \cite{aldousDiaconis1986,DiaconisFillPitman1992}.
By modifying 
the arguments in 
$\cite{DiaconisFillPitman1992}$ 
and 
$\cite{GarsiaLecNote}$,
this paper studies the generalized top to random shuffling defined on the colored permutation group.
For  example, 
when the number of the 'colors' is two, the
Markov chain can be described as follows:
We take 
the top card of the deck of 
$n$ 
cards and before inserting it back into the deck at
the random position, we flip the card with probability $1/2$.
After repeating 
this procedure, we have a random configuration of the cards which can be regarded as an element of the hyperoctahedral
group 
$B_n$.
This generalization 
is similar to those of the riffle shuffle
\cite{bergeron1992orthogonal, NakanoSadahiroRiffle}.
Our main aim 
in this paper is to show a closed formula describing the probability distribution after
shuffling 
$k$ times 
in terms of (generalized) Stirling numbers, 
the explicit form of the eigenvalues of the transition probability matrix, and the mixing time and cut off of the Markov chain.
By a {\em colored permutation group}, 
we mean the wreath product of a cyclic group and a symmetric group.
Throughout, 
the symmetric group of degree 
$n$ 
is denoted by $\mathfrak S_n$,
and the cyclic group of order 
$p$ 
is denoted by $C_p$.
For positive integers 
$p$ and $n$,
the wreath product 
$C_p\wr {\mathfrak S}_n$ 
is denoted by $G_{n,p}$.
That is, 
the colored permutation group 
$G_{n,p}$ 
is defined by
\[
 G_{n,p} = \left\{
 (s,\sigma)\,|\,
 s=(s_1,\ldots,s_n)\in C_p^n,
 \sigma\in{\mathfrak S}_n
 \right\}
\]
equipped with the following multiplication rule,
\[
 (t,\tau)(s,\sigma)
 =
 (\sigma t+s, \tau\sigma)
\]
for $(t,\tau)$ and $(s,\sigma) \in G_{n,p}$, where
\[
 \sigma t = (t_{\sigma(1)},t_{\sigma(2)},\ldots,t_{\sigma(n)}).
\]
For example, let $s=(0,\ldots,0,\stackrel{\stackrel{i}{\vee}}{k},0,\ldots,0)$ 
and
$\sigma=(1,2,\ldots,i)$ 
be a cyclic permutation in 
${\mathfrak S}_n$.
Then,
\[
 (t,\tau)(s,\sigma) =
 ((t_2,t_3,\ldots,t_i,t_1+k,t_{i+1},\ldots,t_{n}),(\tau(2),\tau(3),\ldots,\tau(i),\tau(1),\tau(i+1),\ldots,\tau(n))).
\]
We interpret 
the multiplication by this special element 
$S_{i,k}=(s,\sigma)$ 
as follows:
We have 
a deck of $n$ 
cards each numbered 
$\tau(1),\tau(2),\ldots,\tau(n)$ 
from the top to the bottom.
We take 
the top card and change the {\em color} of
the card to 
$t_1+k$
and insert it into the deck at the $i$ th place from the top.
To introduce 
the shuffle we regard 
$(s,\sigma)\in G_{n,p}$ 
as a sequence of the pairs
$(s_i,\sigma(i))\in C_p\times [n]$, 
so that
$(s,\sigma)$ 
is a word over the alphabet $C_p\times [n]$.
Especially for 
$p\leq 3$, 
we express
$(0,k)\in C_p\times[n]$  
simply by 
$k$,
$(1,k)\in C_p\times[n]$ 
by 
$\bar{k}$,
and 
$(2,k)$ 
by 
$\bar{\bar{k}}$.
For example, 
by using this notation,
$((0,1,0,2), 4123)\in G_{4,3}$ 
can be simply expressed as 
$4\bar{1}2\bar{\bar{3}}$.
Thus elements of 
$G_{n,p}$ 
can be considered as words over the alphabet 
$C_p\times [n]$, 
on which we can use the {\em shuffle operator} 
$\shuffle$.
Here 
the shuffle operator 
$\shuffle$
is defined as follows.
Let 
$\epsilon$
be the empty word, 
$u, v$
be any words, and let 
$a, b$
be the words of length 
$1$.
Then  
$\shuffle$
between two words is defined inductively by the following equations.
\beq
u \shuffle \epsilon
&=&
\epsilon \shuffle u
:=
u
\\
ua \shuffle vb
&:=&
( u\shuffle vb) a 
+
(ua \shuffle v) b.
\eeq
Define the word 
$W_{k,n}$ 
by
\[
 W_{k,n} 
 := 
 (0,k+1) (0,k+2) \cdots (0,n) \in (C_p\times[n])^{n-k}.
\]
Then 
an element 
${\mathbf B}_k$ 
of the group
algebra 
${\mathbb Q}G_{n,p}$  
is defined by
\[
 {\mathbf B}_k 
 :=
 \begin{cases}
 id & (k=0) \\
 \sum_{\alpha \in G_{k,p}} \alpha\shuffle W_{k,n}, 
 & (1 \le k \le n-1) \\
 \sum_{\alpha \in G_{n, p}} \alpha & (k=n)
 \end{cases}
\]
${\bf B}_k$
for 
$k \ge 2$ 
can be regarded as generalized top to random shuffle, 
which corresponds to taking off top 
$k$
cards, flipping them into any colors, and inserting back into random positions.
In particular, we have
 \[
  {\mathbf B}_1 = (0,1)\shuffle W_{1,n} + (1,1)\shuffle W_{1,n} 
  + \cdots + (p-1,1)\shuffle W_{1,n}.
 \]
For example, 
when 
$n=3$ 
and 
$p=2$, 
we have,
\[
{\mathbf B}_1 
= 
1\shuffle 23 + \bar{1}\shuffle 23
= 123 + 213 + 231 + \bar{1}23 + 2\bar{1}3 + 23\bar{1}.
 \]
Therefore 
$\dfrac{1}{np}{\bf B}_1$ 
can be regarded as a
probability distribution over 
$G_{n,p}$,
which we call the {\em top to random shuffle} over 
$G_{n,p}$.
When 
$p=1$, 
the powers of 
$\left(\dfrac{1}{np}{\bf B}_1\right)^k$
exhibit very interesting properties and have been studied extensively
\cite{DiaconisFillPitman1992}.
The main purpose 
of this paper is to consider the case for general 
$p$ : 
(i)
to give a precise description
of the distribution of the eigenvalues of the left regular
representation of 
${\bf B}_1$, 
and
(ii) 
to derive a sharp estimate on the distance between the distribution of 
$\left(\dfrac{1}{np}{\bf B}_1\right)^k$
and the stationary distribution, and show that it exhibits the cut off phenomenon. 
To state our first main result 
we need to define the {\em fixed points} of a colored permutation.
An element 
$(s,\sigma)\in G_{n,p}$ 
has a fixed point at 
$i$
if 
$s_i=0$ 
and 
$\sigma(i)=i$.
For example 
$4\bar{1}2\bar{\bar{3}} \in G_{4,3}$ 
has no fixed point and
$1\bar{2}35\bar{\bar{4}}$ 
has two fixed points at 
$1$ 
and 
$3$.
A {\em derangement} in 
$G_{n,p}$ 
is a colored permutation having no fixed points.
We denote 
the number of derangement in 
$G_{n,p}$ 
by 
$D_{n,p}$,
which is expressed by a closed form given later.
Therefore, the number of colored permutations in
$G_{n,p}$ 
having exactly 
$i$ 
fixed points is
equal to 
${n\choose i}D_{n-i,p}$.
 \begin{theorem}
 \label{thm:main}
 Let 
 $L:G_{n,p}\rightarrow {\rm GL}(L^2(G_{n,p}))$ 
 be the left regular representation of 
 $G_{n,p}$.
 Then 
 the eigenvalues of 
 $L({\mathbf B}_1)$ 
 are
 $0,p,2p,\cdots,np$.
 The multiplicity 
 of the eigenvalue 
 $ip$ 
 $(i=0,1, \cdots, n)$
 is equal to
 the number of colored permutations having exactly 
 $i$
 fixed points.
\end{theorem}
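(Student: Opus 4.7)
My plan is to adapt the Garsia--Diaconis--Fill--Pitman strategy, reducing the spectral theorem to a single trace computation combined with a moment-matching identity.

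The first step is to prove the product formula
\[
\mathbf{B}_1 \mathbf{B}_k = pk \, \mathbf{B}_k + \mathbf{B}_{k+1}, \qquad 0 \le k \le n-1,
\]
together with $\mathbf{B}_1 \mathbf{B}_n = np \, \mathbf{B}_n$, by a direct shuffle-algebra computation in $\mathbb{Q}G_{n,p}$. Each summand of $\mathbf{B}_k$ has a prescribed sorted tail $(0,k+1)(0,k+2)\cdots(0,n)$, and of the $np$ summands of $\mathbf{B}_1$, those whose left multiplication preserves this tail together contribute the coefficient $pk$ (the $p$ color shifts applied to each of the $k$ active values), while the remaining insertions extend the tail and yield $\mathbf{B}_{k+1}$. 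The wreath-product rule $(t,\tau)(s,\sigma) = (\sigma t + s, \tau\sigma)$ must be tracked carefully so that the sum over the $p$ colors collapses cleanly.

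Iterating this recurrence by induction on $k$ gives $\mathbf{B}_1^k = \sum_{j=0}^{n} p^{k-j} S(k,j) \, \mathbf{B}_j$, where $S(k,j)$ is the Stirling number of the second kind (with $S(k,j) = 0$ for $j > k$). Since the identity element appears in each $\mathbf{B}_j$ with coefficient exactly $1$, and since the trace of any group-algebra element in the left regular representation equals $|G_{n,p}|$ times its identity coefficient, we obtain
\[
\mathrm{tr}\, L(\mathbf{B}_1)^k = p^n n! \sum_{j=0}^n p^{k-j} S(k,j).
\]
The theorem will then follow from the algebraic identity
\[
\sum_{j=0}^n \binom{n}{j} D_{n-j,p}\,(jp)^k = p^n n! \sum_{j=0}^n p^{k-j} S(k,j), \qquad k \ge 0,
\]
which I would prove using the Stirling expansion $j^k = \sum_i S(k,i)\,i!\binom{j}{i}$, the Vandermonde identity $\binom{n}{j}\binom{j}{i} = \binom{n}{i}\binom{n-i}{j-i}$, and the fixed-point decomposition $\sum_\ell \binom{m}{\ell} D_{m-\ell,p} = p^m m!$. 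Since both the spectral measure of $L(\mathbf{B}_1)$ and $\sum_j \binom{n}{j} D_{n-j,p}\,\delta_{jp}$ are supported on at most $n+1$ points, the equality of all their moments forces the two measures to coincide.

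I expect the main obstacle to be the first step: the wreath-product color action $\sigma t + s$ in left multiplication is nontrivial, and one must verify carefully that the $p$-fold color summation in $\mathbf{B}_1 \mathbf{B}_k$ produces exactly the factor $pk$ on the $\mathbf{B}_k$ term (and $1$ on $\mathbf{B}_{k+1}$). Once the product formula is established, the remaining Stirling and derangement bookkeeping is routine.
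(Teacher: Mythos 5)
Your outline is essentially correct, and after the first two steps it diverges from the paper in a genuine way. The paper also starts from the recursion (stated there as $\mathbf{B}_k\mathbf{B}_1 = pk\,\mathbf{B}_k + \mathbf{B}_{k+1}$, proved by splitting $\mathbf{B}_k$ according to its leading letter and using associativity of the shuffle product) and from the expansion $\mathbf{B}_1^k=\sum_a p^{k-a}S(k,a)\mathbf{B}_a$. But where you match traces of powers against the candidate spectral distribution, the paper inverts the Stirling relation and builds explicit orthogonal idempotents $\mathbf{e}_i=\frac{1}{i!}\sum_{a\ge i}\frac{(-1)^{a-i}}{p^a(a-i)!}\mathbf{B}_a$ satisfying $\mathbf{B}_1^k=\sum_i (pi)^k\mathbf{e}_i$, and reads off the multiplicity as $\mathrm{tr}\,L(\mathbf{e}_i)=\binom{n}{i}D_{n-i,p}$, using the same observation you use that the identity occurs with coefficient $1$ in each $\mathbf{B}_a$, together with the derangement formula. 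Your moment identity is true, and your sketched proof of it (Stirling expansion of $j^k$, Vandermonde, and $\sum_\ell\binom{m}{\ell}D_{m-\ell,p}=p^m m!$) does go through; it is in effect the paper's trace computation of the idempotents, repackaged. What each route buys: the paper's idempotents give the spectral projections explicitly and are reused later for the $\mathbf{B}_a$-shuffles, while your route avoids constructing and verifying the idempotents at the cost of one extra (easy) combinatorial identity.

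Two repairs are needed to make your last step airtight. First, as phrased, the claim that the spectral measure of $L(\mathbf{B}_1)$ is supported on at most $n+1$ points is part of the conclusion, so invoking it is circular; derive it instead from your step 1: since $\mathbf{B}_a=\mathbf{B}_1(\mathbf{B}_1-p\,\mathbf{I})\cdots(\mathbf{B}_1-(a-1)p\,\mathbf{I})$ and $\mathbf{B}_n\mathbf{B}_1=np\,\mathbf{B}_n$, the element $\mathbf{B}_1$ is annihilated by $x(x-p)(x-2p)\cdots(x-np)$. This confines the spectrum to $\{0,p,\dots,np\}$, shows $L(\mathbf{B}_1)$ is diagonalizable (so the trace-of-powers argument, which a priori controls only algebraic multiplicities, really does determine eigenspace dimensions), and is also what justifies truncating the expansion of $\mathbf{B}_1^k$ at $a=n$ for $k>n$, which your trace formula tacitly uses. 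Second, be careful with the order of multiplication in the product formula: the paper proves $\mathbf{B}_k\mathbf{B}_1$ (right multiplication by $\mathbf{B}_1$ is one more shuffle step, which is what the leading-letter decomposition exploits), whereas you write $\mathbf{B}_1\mathbf{B}_k$; the two agree only after commutativity is established, so your combinatorial verification must match the order you actually state.
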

{\bf Remark }\\
(1)
Let 
$P_{n, p}$
be the transition probability matrix 
of the Markov chain generated by the top to random shuffle.
Then 
the eigenvalues of 
$P_{n, p}$
are given by 
$(ip) / (np) =i / n$, 
$i=0, 1, \cdots, n$.
\\
(2)
If 
$p=1$, 
$P_{n, 1}$
does not have 
$(n-1)/n$
as an eigenvalue because of 
$D_{n - (n-1), 1} = 0$.
It is not the case for 
$p \ge 2$, 
since 
$D_{n - (n-1), \, p} \ne 0$
for 
$p \ge 2$. \\
{\bf Example}\\
When 
$n=3$ 
and 
$p=2$, 
we have 
$48$ 
elements in 
$G_{n,p}$.
The left 
regular representation of 
${\mathbf B}_1$ 
is
 {\tiny
 \[\arraycolsep=1.4pt\def\arraystretch{1.0}
 \left(\begin{array}{rrrrrrrrrrrrrrrrrrrrrrrrrrrrrrrrrrrrrrrrrrrrrrrr}
1 & 0 & 0 & 0 & 1 & 0 & 0 & 0 & 0 & 0 & 0 & 0 & 1 & 0 & 0 & 0 & 0 & 0 & 0 & 0 & 0 & 0 & 0 & 0 & 0 & 0 & 1 & 0 & 1 & 0 & 0 & 0 & 0 & 0 & 0 & 0 & 0 & 0 & 1 & 0 & 0 & 0 & 0 & 0 & 0 & 0 & 0 & 0 \\
0 & 1 & 0 & 0 & 0 & 1 & 0 & 0 & 0 & 0 & 0 & 0 & 0 & 0 & 1 & 0 & 0 & 0 & 0 & 0 & 0 & 0 & 0 & 0 & 0 & 0 & 0 & 1 & 0 & 1 & 0 & 0 & 0 & 0 & 0 & 0 & 1 & 0 & 0 & 0 & 0 & 0 & 0 & 0 & 0 & 0 & 0 & 0 \\
0 & 0 & 1 & 0 & 0 & 0 & 1 & 0 & 0 & 0 & 0 & 0 & 0 & 1 & 0 & 0 & 0 & 0 & 0 & 0 & 0 & 0 & 0 & 0 & 1 & 0 & 0 & 0 & 0 & 0 & 1 & 0 & 0 & 0 & 0 & 0 & 0 & 0 & 0 & 1 & 0 & 0 & 0 & 0 & 0 & 0 & 0 & 0 \\
0 & 0 & 0 & 1 & 0 & 0 & 0 & 1 & 0 & 0 & 0 & 0 & 0 & 0 & 0 & 1 & 0 & 0 & 0 & 0 & 0 & 0 & 0 & 0 & 0 & 1 & 0 & 0 & 0 & 0 & 0 & 1 & 0 & 0 & 0 & 0 & 0 & 1 & 0 & 0 & 0 & 0 & 0 & 0 & 0 & 0 & 0 & 0 \\
0 & 0 & 0 & 0 & 1 & 0 & 0 & 0 & 1 & 0 & 0 & 0 & 0 & 0 & 0 & 0 & 0 & 0 & 0 & 0 & 1 & 0 & 0 & 0 & 0 & 0 & 0 & 0 & 1 & 0 & 0 & 0 & 0 & 1 & 0 & 0 & 0 & 0 & 0 & 0 & 0 & 0 & 0 & 0 & 0 & 1 & 0 & 0 \\
0 & 0 & 0 & 0 & 0 & 1 & 0 & 0 & 0 & 1 & 0 & 0 & 0 & 0 & 0 & 0 & 0 & 0 & 0 & 0 & 0 & 0 & 1 & 0 & 0 & 0 & 0 & 0 & 0 & 1 & 0 & 0 & 1 & 0 & 0 & 0 & 0 & 0 & 0 & 0 & 0 & 0 & 0 & 0 & 0 & 0 & 0 & 1 \\
0 & 0 & 0 & 0 & 0 & 0 & 1 & 0 & 0 & 0 & 1 & 0 & 0 & 0 & 0 & 0 & 0 & 0 & 0 & 0 & 0 & 1 & 0 & 0 & 0 & 0 & 0 & 0 & 0 & 0 & 1 & 0 & 0 & 0 & 0 & 1 & 0 & 0 & 0 & 0 & 0 & 0 & 0 & 0 & 1 & 0 & 0 & 0 \\
0 & 0 & 0 & 0 & 0 & 0 & 0 & 1 & 0 & 0 & 0 & 1 & 0 & 0 & 0 & 0 & 0 & 0 & 0 & 0 & 0 & 0 & 0 & 1 & 0 & 0 & 0 & 0 & 0 & 0 & 0 & 1 & 0 & 0 & 1 & 0 & 0 & 0 & 0 & 0 & 0 & 0 & 0 & 0 & 0 & 0 & 1 & 0 \\
1 & 0 & 0 & 0 & 0 & 0 & 0 & 0 & 1 & 0 & 0 & 0 & 0 & 0 & 0 & 0 & 1 & 0 & 0 & 0 & 0 & 0 & 0 & 0 & 0 & 0 & 1 & 0 & 0 & 0 & 0 & 0 & 0 & 1 & 0 & 0 & 0 & 0 & 0 & 0 & 1 & 0 & 0 & 0 & 0 & 0 & 0 & 0 \\
0 & 1 & 0 & 0 & 0 & 0 & 0 & 0 & 0 & 1 & 0 & 0 & 0 & 0 & 0 & 0 & 0 & 0 & 1 & 0 & 0 & 0 & 0 & 0 & 0 & 0 & 0 & 1 & 0 & 0 & 0 & 0 & 1 & 0 & 0 & 0 & 0 & 0 & 0 & 0 & 0 & 0 & 1 & 0 & 0 & 0 & 0 & 0 \\
0 & 0 & 1 & 0 & 0 & 0 & 0 & 0 & 0 & 0 & 1 & 0 & 0 & 0 & 0 & 0 & 0 & 1 & 0 & 0 & 0 & 0 & 0 & 0 & 1 & 0 & 0 & 0 & 0 & 0 & 0 & 0 & 0 & 0 & 0 & 1 & 0 & 0 & 0 & 0 & 0 & 1 & 0 & 0 & 0 & 0 & 0 & 0 \\
0 & 0 & 0 & 1 & 0 & 0 & 0 & 0 & 0 & 0 & 0 & 1 & 0 & 0 & 0 & 0 & 0 & 0 & 0 & 1 & 0 & 0 & 0 & 0 & 0 & 1 & 0 & 0 & 0 & 0 & 0 & 0 & 0 & 0 & 1 & 0 & 0 & 0 & 0 & 0 & 0 & 0 & 0 & 1 & 0 & 0 & 0 & 0 \\
1 & 0 & 0 & 0 & 0 & 0 & 0 & 0 & 0 & 0 & 0 & 0 & 1 & 0 & 0 & 0 & 1 & 0 & 0 & 0 & 0 & 0 & 0 & 0 & 0 & 0 & 1 & 0 & 0 & 0 & 0 & 0 & 0 & 0 & 0 & 0 & 0 & 0 & 1 & 0 & 1 & 0 & 0 & 0 & 0 & 0 & 0 & 0 \\
0 & 0 & 1 & 0 & 0 & 0 & 0 & 0 & 0 & 0 & 0 & 0 & 0 & 1 & 0 & 0 & 0 & 1 & 0 & 0 & 0 & 0 & 0 & 0 & 1 & 0 & 0 & 0 & 0 & 0 & 0 & 0 & 0 & 0 & 0 & 0 & 0 & 0 & 0 & 1 & 0 & 1 & 0 & 0 & 0 & 0 & 0 & 0 \\
0 & 1 & 0 & 0 & 0 & 0 & 0 & 0 & 0 & 0 & 0 & 0 & 0 & 0 & 1 & 0 & 0 & 0 & 1 & 0 & 0 & 0 & 0 & 0 & 0 & 0 & 0 & 1 & 0 & 0 & 0 & 0 & 0 & 0 & 0 & 0 & 1 & 0 & 0 & 0 & 0 & 0 & 1 & 0 & 0 & 0 & 0 & 0 \\
0 & 0 & 0 & 1 & 0 & 0 & 0 & 0 & 0 & 0 & 0 & 0 & 0 & 0 & 0 & 1 & 0 & 0 & 0 & 1 & 0 & 0 & 0 & 0 & 0 & 1 & 0 & 0 & 0 & 0 & 0 & 0 & 0 & 0 & 0 & 0 & 0 & 1 & 0 & 0 & 0 & 0 & 0 & 1 & 0 & 0 & 0 & 0 \\
0 & 0 & 0 & 0 & 0 & 0 & 0 & 0 & 1 & 0 & 0 & 0 & 0 & 0 & 0 & 0 & 1 & 0 & 0 & 0 & 1 & 0 & 0 & 0 & 0 & 0 & 0 & 0 & 0 & 0 & 0 & 0 & 0 & 1 & 0 & 0 & 0 & 0 & 0 & 0 & 1 & 0 & 0 & 0 & 0 & 1 & 0 & 0 \\
0 & 0 & 0 & 0 & 0 & 0 & 0 & 0 & 0 & 0 & 1 & 0 & 0 & 0 & 0 & 0 & 0 & 1 & 0 & 0 & 0 & 1 & 0 & 0 & 0 & 0 & 0 & 0 & 0 & 0 & 0 & 0 & 0 & 0 & 0 & 1 & 0 & 0 & 0 & 0 & 0 & 1 & 0 & 0 & 1 & 0 & 0 & 0 \\
0 & 0 & 0 & 0 & 0 & 0 & 0 & 0 & 0 & 1 & 0 & 0 & 0 & 0 & 0 & 0 & 0 & 0 & 1 & 0 & 0 & 0 & 1 & 0 & 0 & 0 & 0 & 0 & 0 & 0 & 0 & 0 & 1 & 0 & 0 & 0 & 0 & 0 & 0 & 0 & 0 & 0 & 1 & 0 & 0 & 0 & 0 & 1 \\
0 & 0 & 0 & 0 & 0 & 0 & 0 & 0 & 0 & 0 & 0 & 1 & 0 & 0 & 0 & 0 & 0 & 0 & 0 & 1 & 0 & 0 & 0 & 1 & 0 & 0 & 0 & 0 & 0 & 0 & 0 & 0 & 0 & 0 & 1 & 0 & 0 & 0 & 0 & 0 & 0 & 0 & 0 & 1 & 0 & 0 & 1 & 0 \\
0 & 0 & 0 & 0 & 1 & 0 & 0 & 0 & 0 & 0 & 0 & 0 & 1 & 0 & 0 & 0 & 0 & 0 & 0 & 0 & 1 & 0 & 0 & 0 & 0 & 0 & 0 & 0 & 1 & 0 & 0 & 0 & 0 & 0 & 0 & 0 & 0 & 0 & 1 & 0 & 0 & 0 & 0 & 0 & 0 & 1 & 0 & 0 \\
0 & 0 & 0 & 0 & 0 & 0 & 1 & 0 & 0 & 0 & 0 & 0 & 0 & 1 & 0 & 0 & 0 & 0 & 0 & 0 & 0 & 1 & 0 & 0 & 0 & 0 & 0 & 0 & 0 & 0 & 1 & 0 & 0 & 0 & 0 & 0 & 0 & 0 & 0 & 1 & 0 & 0 & 0 & 0 & 1 & 0 & 0 & 0 \\
0 & 0 & 0 & 0 & 0 & 1 & 0 & 0 & 0 & 0 & 0 & 0 & 0 & 0 & 1 & 0 & 0 & 0 & 0 & 0 & 0 & 0 & 1 & 0 & 0 & 0 & 0 & 0 & 0 & 1 & 0 & 0 & 0 & 0 & 0 & 0 & 1 & 0 & 0 & 0 & 0 & 0 & 0 & 0 & 0 & 0 & 0 & 1 \\
0 & 0 & 0 & 0 & 0 & 0 & 0 & 1 & 0 & 0 & 0 & 0 & 0 & 0 & 0 & 1 & 0 & 0 & 0 & 0 & 0 & 0 & 0 & 1 & 0 & 0 & 0 & 0 & 0 & 0 & 0 & 1 & 0 & 0 & 0 & 0 & 0 & 1 & 0 & 0 & 0 & 0 & 0 & 0 & 0 & 0 & 1 & 0 \\
0 & 0 & 1 & 0 & 1 & 0 & 0 & 0 & 0 & 0 & 0 & 0 & 0 & 0 & 1 & 0 & 0 & 0 & 0 & 0 & 0 & 0 & 0 & 0 & 1 & 0 & 0 & 0 & 1 & 0 & 0 & 0 & 0 & 0 & 0 & 0 & 1 & 0 & 0 & 0 & 0 & 0 & 0 & 0 & 0 & 0 & 0 & 0 \\
0 & 0 & 0 & 1 & 0 & 1 & 0 & 0 & 0 & 0 & 0 & 0 & 1 & 0 & 0 & 0 & 0 & 0 & 0 & 0 & 0 & 0 & 0 & 0 & 0 & 1 & 0 & 0 & 0 & 1 & 0 & 0 & 0 & 0 & 0 & 0 & 0 & 0 & 1 & 0 & 0 & 0 & 0 & 0 & 0 & 0 & 0 & 0 \\
1 & 0 & 0 & 0 & 0 & 0 & 1 & 0 & 0 & 0 & 0 & 0 & 0 & 0 & 0 & 1 & 0 & 0 & 0 & 0 & 0 & 0 & 0 & 0 & 0 & 0 & 1 & 0 & 0 & 0 & 1 & 0 & 0 & 0 & 0 & 0 & 0 & 1 & 0 & 0 & 0 & 0 & 0 & 0 & 0 & 0 & 0 & 0 \\
0 & 1 & 0 & 0 & 0 & 0 & 0 & 1 & 0 & 0 & 0 & 0 & 0 & 1 & 0 & 0 & 0 & 0 & 0 & 0 & 0 & 0 & 0 & 0 & 0 & 0 & 0 & 1 & 0 & 0 & 0 & 1 & 0 & 0 & 0 & 0 & 0 & 0 & 0 & 1 & 0 & 0 & 0 & 0 & 0 & 0 & 0 & 0 \\
0 & 0 & 0 & 0 & 1 & 0 & 0 & 0 & 0 & 1 & 0 & 0 & 0 & 0 & 0 & 0 & 0 & 0 & 0 & 0 & 0 & 1 & 0 & 0 & 0 & 0 & 0 & 0 & 1 & 0 & 0 & 0 & 1 & 0 & 0 & 0 & 0 & 0 & 0 & 0 & 0 & 0 & 0 & 0 & 1 & 0 & 0 & 0 \\
0 & 0 & 0 & 0 & 0 & 1 & 0 & 0 & 1 & 0 & 0 & 0 & 0 & 0 & 0 & 0 & 0 & 0 & 0 & 0 & 0 & 0 & 0 & 1 & 0 & 0 & 0 & 0 & 0 & 1 & 0 & 0 & 0 & 1 & 0 & 0 & 0 & 0 & 0 & 0 & 0 & 0 & 0 & 0 & 0 & 0 & 1 & 0 \\
0 & 0 & 0 & 0 & 0 & 0 & 1 & 0 & 0 & 0 & 0 & 1 & 0 & 0 & 0 & 0 & 0 & 0 & 0 & 0 & 1 & 0 & 0 & 0 & 0 & 0 & 0 & 0 & 0 & 0 & 1 & 0 & 0 & 0 & 1 & 0 & 0 & 0 & 0 & 0 & 0 & 0 & 0 & 0 & 0 & 1 & 0 & 0 \\
0 & 0 & 0 & 0 & 0 & 0 & 0 & 1 & 0 & 0 & 1 & 0 & 0 & 0 & 0 & 0 & 0 & 0 & 0 & 0 & 0 & 0 & 1 & 0 & 0 & 0 & 0 & 0 & 0 & 0 & 0 & 1 & 0 & 0 & 0 & 1 & 0 & 0 & 0 & 0 & 0 & 0 & 0 & 0 & 0 & 0 & 0 & 1 \\
0 & 0 & 1 & 0 & 0 & 0 & 0 & 0 & 0 & 1 & 0 & 0 & 0 & 0 & 0 & 0 & 1 & 0 & 0 & 0 & 0 & 0 & 0 & 0 & 1 & 0 & 0 & 0 & 0 & 0 & 0 & 0 & 1 & 0 & 0 & 0 & 0 & 0 & 0 & 0 & 1 & 0 & 0 & 0 & 0 & 0 & 0 & 0 \\
0 & 0 & 0 & 1 & 0 & 0 & 0 & 0 & 1 & 0 & 0 & 0 & 0 & 0 & 0 & 0 & 0 & 0 & 1 & 0 & 0 & 0 & 0 & 0 & 0 & 1 & 0 & 0 & 0 & 0 & 0 & 0 & 0 & 1 & 0 & 0 & 0 & 0 & 0 & 0 & 0 & 0 & 1 & 0 & 0 & 0 & 0 & 0 \\
1 & 0 & 0 & 0 & 0 & 0 & 0 & 0 & 0 & 0 & 0 & 1 & 0 & 0 & 0 & 0 & 0 & 1 & 0 & 0 & 0 & 0 & 0 & 0 & 0 & 0 & 1 & 0 & 0 & 0 & 0 & 0 & 0 & 0 & 1 & 0 & 0 & 0 & 0 & 0 & 0 & 1 & 0 & 0 & 0 & 0 & 0 & 0 \\
0 & 1 & 0 & 0 & 0 & 0 & 0 & 0 & 0 & 0 & 1 & 0 & 0 & 0 & 0 & 0 & 0 & 0 & 0 & 1 & 0 & 0 & 0 & 0 & 0 & 0 & 0 & 1 & 0 & 0 & 0 & 0 & 0 & 0 & 0 & 1 & 0 & 0 & 0 & 0 & 0 & 0 & 0 & 1 & 0 & 0 & 0 & 0 \\
0 & 0 & 1 & 0 & 0 & 0 & 0 & 0 & 0 & 0 & 0 & 0 & 0 & 0 & 1 & 0 & 1 & 0 & 0 & 0 & 0 & 0 & 0 & 0 & 1 & 0 & 0 & 0 & 0 & 0 & 0 & 0 & 0 & 0 & 0 & 0 & 1 & 0 & 0 & 0 & 1 & 0 & 0 & 0 & 0 & 0 & 0 & 0 \\
1 & 0 & 0 & 0 & 0 & 0 & 0 & 0 & 0 & 0 & 0 & 0 & 0 & 0 & 0 & 1 & 0 & 1 & 0 & 0 & 0 & 0 & 0 & 0 & 0 & 0 & 1 & 0 & 0 & 0 & 0 & 0 & 0 & 0 & 0 & 0 & 0 & 1 & 0 & 0 & 0 & 1 & 0 & 0 & 0 & 0 & 0 & 0 \\
0 & 0 & 0 & 1 & 0 & 0 & 0 & 0 & 0 & 0 & 0 & 0 & 1 & 0 & 0 & 0 & 0 & 0 & 1 & 0 & 0 & 0 & 0 & 0 & 0 & 1 & 0 & 0 & 0 & 0 & 0 & 0 & 0 & 0 & 0 & 0 & 0 & 0 & 1 & 0 & 0 & 0 & 1 & 0 & 0 & 0 & 0 & 0 \\
0 & 1 & 0 & 0 & 0 & 0 & 0 & 0 & 0 & 0 & 0 & 0 & 0 & 1 & 0 & 0 & 0 & 0 & 0 & 1 & 0 & 0 & 0 & 0 & 0 & 0 & 0 & 1 & 0 & 0 & 0 & 0 & 0 & 0 & 0 & 0 & 0 & 0 & 0 & 1 & 0 & 0 & 0 & 1 & 0 & 0 & 0 & 0 \\
0 & 0 & 0 & 0 & 0 & 0 & 0 & 0 & 0 & 1 & 0 & 0 & 0 & 0 & 0 & 0 & 1 & 0 & 0 & 0 & 0 & 1 & 0 & 0 & 0 & 0 & 0 & 0 & 0 & 0 & 0 & 0 & 1 & 0 & 0 & 0 & 0 & 0 & 0 & 0 & 1 & 0 & 0 & 0 & 1 & 0 & 0 & 0 \\
0 & 0 & 0 & 0 & 0 & 0 & 0 & 0 & 0 & 0 & 0 & 1 & 0 & 0 & 0 & 0 & 0 & 1 & 0 & 0 & 1 & 0 & 0 & 0 & 0 & 0 & 0 & 0 & 0 & 0 & 0 & 0 & 0 & 0 & 1 & 0 & 0 & 0 & 0 & 0 & 0 & 1 & 0 & 0 & 0 & 1 & 0 & 0 \\
0 & 0 & 0 & 0 & 0 & 0 & 0 & 0 & 1 & 0 & 0 & 0 & 0 & 0 & 0 & 0 & 0 & 0 & 1 & 0 & 0 & 0 & 0 & 1 & 0 & 0 & 0 & 0 & 0 & 0 & 0 & 0 & 0 & 1 & 0 & 0 & 0 & 0 & 0 & 0 & 0 & 0 & 1 & 0 & 0 & 0 & 1 & 0 \\
0 & 0 & 0 & 0 & 0 & 0 & 0 & 0 & 0 & 0 & 1 & 0 & 0 & 0 & 0 & 0 & 0 & 0 & 0 & 1 & 0 & 0 & 1 & 0 & 0 & 0 & 0 & 0 & 0 & 0 & 0 & 0 & 0 & 0 & 0 & 1 & 0 & 0 & 0 & 0 & 0 & 0 & 0 & 1 & 0 & 0 & 0 & 1 \\
0 & 0 & 0 & 0 & 1 & 0 & 0 & 0 & 0 & 0 & 0 & 0 & 0 & 0 & 1 & 0 & 0 & 0 & 0 & 0 & 0 & 1 & 0 & 0 & 0 & 0 & 0 & 0 & 1 & 0 & 0 & 0 & 0 & 0 & 0 & 0 & 1 & 0 & 0 & 0 & 0 & 0 & 0 & 0 & 1 & 0 & 0 & 0 \\
0 & 0 & 0 & 0 & 0 & 0 & 1 & 0 & 0 & 0 & 0 & 0 & 0 & 0 & 0 & 1 & 0 & 0 & 0 & 0 & 1 & 0 & 0 & 0 & 0 & 0 & 0 & 0 & 0 & 0 & 1 & 0 & 0 & 0 & 0 & 0 & 0 & 1 & 0 & 0 & 0 & 0 & 0 & 0 & 0 & 1 & 0 & 0 \\
0 & 0 & 0 & 0 & 0 & 1 & 0 & 0 & 0 & 0 & 0 & 0 & 1 & 0 & 0 & 0 & 0 & 0 & 0 & 0 & 0 & 0 & 0 & 1 & 0 & 0 & 0 & 0 & 0 & 1 & 0 & 0 & 0 & 0 & 0 & 0 & 0 & 0 & 1 & 0 & 0 & 0 & 0 & 0 & 0 & 0 & 1 & 0 \\
0 & 0 & 0 & 0 & 0 & 0 & 0 & 1 & 0 & 0 & 0 & 0 & 0 & 1 & 0 & 0 & 0 & 0 & 0 & 0 & 0 & 0 & 1 & 0 & 0 & 0 & 0 & 0 & 0 & 0 & 0 & 1 & 0 & 0 & 0 & 0 & 0 & 0 & 0 & 1 & 0 & 0 & 0 & 0 & 0 & 0 & 0 & 1
\end{array}\right)
 \] 
   }
   whose characteritic polynomial 
   $\det(xI-L({\mathbf B}_1))$ 
   is
   \[
   (x - 6) \cdot (x - 4)^{3} \cdot (x - 2)^{15} \cdot x^{29}. 
   \]
We turn 
to study the mixing time and cut off.
We see that 
the mixing time is in the order of 
$n \log n$ 
(Theorem \ref{thm:mixingtime}). 
Let 
$d_{TV}(\mu, \nu) :=
\max_{A \subset G_{n, p}} 
|
\mu (A) - \nu (A)
|$
be the total variation distance between the probability distributions 
$\mu, \nu$
on 
$G_{n, p}$. 
%
\begin{theorem}
\label{thm:cutoff}
\mbox{}\\
(1)
Let 
$c > 0$. 
Then 
we can find 
$f(c) > 0$, 
s.t. for 
$k = \lfloor n \log n + c n \rfloor$
we have 
\beq
d_{TV}
\left(
\left(
\frac {1}{np}
{\bf B}_1
\right)^k, U
\right)
&=&
f(c) + o(1), 
\quad
n \to \infty. 
\eeq
where 
\beq
f(c) \le e^{-c} + {\cal O}(e^{-2c}), 
\quad
c \to \infty.
\eeq
(2)
Suppose 
$\{ c_n \}_n$
satisfy 
\beq
\log 
\Bigl(
\log (np)
\cdot
(\log n + \alpha)
\Bigr)
\le
c_n 
<< \log n, 
\quad
\alpha > 0.
\eeq
Let 
$k := \lfloor n \log n - c_n \cdot n \rfloor$.
Then for any 
$\delta > 0$, 
we have 
\beq
d_{TV}
\left(
\left(
\frac {1}{np}
{\bf B}_1
\right)^k, U
\right)
\ge
1 - {\cal O}\left(
\frac {1}{n^{\alpha}}
\right), 
\quad
n \to \infty, 
\eeq
where 
$a_n \ll b_n$
means that 
$\lim_{n \to \infty}
a_n / b_n
= 0$
and 
$\lfloor x \rfloor
:=
\max \{ k \in {\bf Z}
\, | \,
k \le x \}$
is the  integer part of 
$x$.
\end{theorem}
%

%
{\bf Remark }\\
{\it 
(1)
In Theorem 1.2(2), 
the condition 
$\log 
\Bigl(
\log (np)
\cdot
(\log n + \alpha) 
\Bigr)
\le
c_n 
<< \log n$
for 
$\{ c_n \}$
roughly means that 
$2 \log \log n + \dfrac {
(\log p + \alpha)
}
{\log n}  < c_n \ll \log n$
for large 
$n$.
\\
(2)
The argument using the strong stationary time in 
\cite{aldousDiaconis1986}, 
\cite{Peres}
(eq.(6.16) and Proposition 7.14)
still works for this case, but Theorem 1.2 aims to study the same problem with purely combinatoric method. 
The upper bound (Theorem 1.2(1)) 
is the same as that in 
\cite{aldousDiaconis1986, Peres}. 
However, the lower bound in Theorem 1.2(2) is not good enough as in   
\cite{aldousDiaconis1986, Peres}.
}
\\

The outline 
of this paper is as follows. 
In section 2, 
we study basic properties of 
${\bf B}_k$ 
and derive a formula expressing the powers of 
${\bf B}_k$ 
in terms of orthogonal idempotents, from which we can compute the eigenvalues and corresponding eigenspaces of the left regular representation of 
${\bf B}_1$ 
explicitly. 
In section 3, 
we derive a formula computing the total variance distance beween the probability distribution of the repeated top to random shuffles and the uniform distribution.
It then  
follows that the mixing time is in the order of 
$n \log n$.
In section 4, 
we further estimate this total variation distance using the asymptotic formula for the Stirling numbers of the second kind \cite{Menon}, 
yielding a cut off statement.
In Appendix, 
we provide proofs for some elementary facts for completeness.
%

\section{Eigenvalues and their multiplicities}
We begin by 
studying some algebraic properties of 
${\bf B}_k$'s 
by which we derive the representation of the powers of 
${\bf B}_1$
(Theorem  \ref{thm:B1power}).
The following 
lemma follows from a theorem in 
\cite{tian2014generalizations}
which studies more general cases. 
However 
we present its elementary proof.
 \begin{lemma}
  \label{lem:fundamental}
  %
  %
We have 
the following formulas. \\
  (1)
  \beq
  {\bf B}_k {\bf B}_1
  &=&
  \begin{cases}
  p k {\bf B}_k + {\bf B}_{k+1} & (1 \le k \le n-1) \\
  p n {\bf B}_n  & (k = n) 
  \end{cases}
  \eeq
  (2)
  %
  \beq
 &&
  {\bf B}_k
  =
  {\bf B}_1
  ({\bf B}_1 - p {\bf I})
   ({\bf B}_1 - 2p {\bf I})
   \cdots
    ({\bf B}_1 - (k-1)p {\bf I}), 
    \quad
k=1, 2, \cdots, n
\\
%
 &&
  {\bf B}_1
  ({\bf B}_1 - p {\bf I})
   ({\bf B}_1 - 2p {\bf I})
   \cdots
    ({\bf B}_1 - (\ell-1)p {\bf I})
    = {\bf 0}, 
    \quad
    \ell > k.
  \eeq
  In particular,
 ${\mathbf B}_1,{\mathbf B}_2,\ldots,{\mathbf B}_n$ generate a
 commutative subalgebra of  ${\mathbb Q}G_{n,p}$.
 \end{lemma}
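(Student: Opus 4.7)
The plan is to prove the recursion $\mathbf{B}_k \mathbf{B}_1 = pk \mathbf{B}_k + \mathbf{B}_{k+1}$ (for $1 \le k \le n-1$) by expanding both sides in the basis of $G_{n,p}$ and matching coefficients; the polynomial factorization and the commutativity assertion then follow by a straightforward induction.

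First I would set up the combinatorial descriptions. Let $A_k \subseteq G_{n,p}$ be the set of colored permutations $\gamma = (r,\rho)$ in whose word form $(r_1,\rho(1))(r_2,\rho(2))\cdots(r_n,\rho(n))$ the ``high'' letters $(0,k+1), (0,k+2), \ldots, (0,n)$ appear in order of increasing face value and with color zero. This is just a restatement of the shuffle definition of $\mathbf{B}_k$, so $\mathbf{B}_k = \sum_{\gamma \in A_k} \gamma$, and clearly $A_k \subseteq A_{k+1}$ and $A_n = G_{n,p}$. I would also check the identity $\mathbf{B}_1 = \sum_{i=1}^n \sum_{c=0}^{p-1} S_{i,c}$ directly from the multiplication rule: the word form of $S_{i,c}$ is $(0,2)(0,3)\cdots(0,i)(c,1)(0,i+1)\cdots(0,n)$, which is exactly the result of inserting $(c,1)$ into $W_{1,n}$ at the $i$-th position, so these $np$ group elements enumerate the terms of $(c,1)\shuffle W_{1,n}$ as $i$ and $c$ vary.

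The heart of the proof is a bijective coefficient count. For each $\gamma=(r,\rho)\in G_{n,p}$ and each pair $(i,c)\in[n]\times C_p$, the equation $\beta S_{i,c}=\gamma$ determines a unique $\beta$ whose word is obtained from $\gamma$'s by extracting the letter in position $i$ and prepending $(r_i - c,\rho(i))$ to the top. I would then split into two cases. In Case (A), $\rho(i)\le k$, so $\gamma_i$ is a low card; extracting a low card does not alter the subword on high letters, hence $\beta\in A_k$ iff $\gamma\in A_k$. This yields $pk$ valid pairs for every $\gamma\in A_k$ (namely $k$ choices of a low position $i$ and $p$ choices of $c$). In Case (B), $\rho(i)>k$, so the new top of $\beta$ is itself a high letter; for $\beta$ to lie in $A_k$ this top must be the smallest high letter $(0,k+1)$, forcing $\rho(i)=k+1$ and $c=r_i$, and simultaneously the remaining high letters $k+2,\ldots,n$ of $\gamma$ must be in order with color zero, i.e.\ $\gamma\in A_{k+1}$; this gives exactly one valid pair. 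Summing, the coefficient of $\gamma$ in $\mathbf{B}_k\mathbf{B}_1$ equals $pk+1$ for $\gamma\in A_k$, $1$ for $\gamma\in A_{k+1}\setminus A_k$, and $0$ otherwise, matching the coefficient of $\gamma$ in $pk\mathbf{B}_k + \mathbf{B}_{k+1}$. The boundary case $k=n$ is easier: $\mathbf{B}_n$ is the full group sum and hence right-invariant, so $\mathbf{B}_n\mathbf{B}_1 = pn\mathbf{B}_n$.

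With the recursion in hand, the factorization follows by induction: rewrite it as $\mathbf{B}_{k+1} = \mathbf{B}_k(\mathbf{B}_1 - pk\mathbf{I})$ and chain from $\mathbf{B}_1 = \mathbf{B}_1$. Commutativity of $\mathbf{B}_1,\ldots,\mathbf{B}_n$ is then automatic since each is a polynomial in $\mathbf{B}_1$. The only delicate step is the Case (B) verification: one must confirm that when the card $k+1$ is lifted out of $\gamma$ to the top of $\beta$ with its color reset to zero, the high subword of $\beta$ genuinely becomes $(0,k+1)(0,k+2)\cdots(0,n)$. This in turn reduces to the observation that the relative order and colors of the letters $k+2,\ldots,n$ in $\gamma$ are not disturbed by the extraction.
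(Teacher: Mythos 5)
Your argument is correct: the identification of the terms of ${\bf B}_k$ with the set $A_k$ of words whose letters $(0,k+1),\ldots,(0,n)$ appear in increasing order with color zero is exactly right, the inverse-map description of $\beta=\gamma S_{i,c}^{-1}$ matches the multiplication rule, and the dichotomy (extracted card of value $\le k$ versus value $k+1$ with color reset) gives the coefficients $pk+1$, $1$, $0$ on $A_k$, $A_{k+1}\setminus A_k$ and the complement, which is precisely $pk\,{\bf B}_k+{\bf B}_{k+1}$; the $k=n$ case, the telescoping factorization and the commutativity conclusion are all fine. The route differs from the paper's in organization rather than substance: the paper never matches coefficients element by element, but instead groups the terms of ${\bf B}_k$ by their \emph{leading} letter, writing ${\bf B}_k=\sum_t {\bf C}_k(t)$, and invokes Lemma \ref{grouping} (proved via associativity of the shuffle product and the interpretation of right multiplication by ${\bf B}_1$ as ``remove the top letter, recolor, and reinsert everywhere'') to get ${\bf C}_k(t){\bf B}_1={\bf B}_k$ for each of the $pk$ low leading letters and ${\bf B}_{k+1}$ for $t=(0,k+1)$, summing to the same recursion. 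Your version is the ``backward'' count of the same combinatorial fact: instead of showing that each ${\bf C}_k(t){\bf B}_1$ reproduces a full ${\bf B}_k$ or ${\bf B}_{k+1}$, you count, for each fixed target $\gamma$, how many pairs $(\beta,S_{i,c})$ with $\beta\in A_k$ produce it. What the paper's approach buys is brevity at the level of the algebra (once the grouping lemma is granted, the recursion is one line) and a reusable shuffle identity; what yours buys is self-containedness, since it needs neither the associativity of the shuffle operator nor the auxiliary grouping lemma, at the cost of the explicit bookkeeping of word positions and colors that you carry out in Cases (A) and (B).
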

\begin{proof}
(1)
We  
suppose that 
$k \le n-1$.
The case for 
$k=n$
follows similarly. 
Then 
 we rewrite 
 ${\mathbf B}_k=\sum_{\alpha\in G_{k,p}}\alpha\shuffle W_{k,n}$
 by grouping the terms by the leading letter as follows.
 \[
  {\mathbf B}_k 
  = 
  \sum_{t\in (C_p\times[k]) \cup\{(0,k+1)\}} {\mathbf C}_{k}(t),
 \]
 where 
 ${\mathbf C}_k (t)$ 
 is the sum of the elements in 
 ${\mathbf B}_k$ 
 whose leading letter is 
 $t$. 
 For example 
 when 
 $p=2$ 
 and 
 $n=4$,
 \[
 {\mathbf C}_{2}(\bar{2}) =
 \bar{2}134 + \bar{2}314 + \bar{2}341 +
 \bar{2}\bar{1}34 + \bar{2}3\bar{1}4 + \bar{2}34\bar{1}
 \]
 and
 \[
 {\mathbf B}_2 = {\mathbf C}_2(1) + {\mathbf C}_2(\bar{1})
 +{\mathbf C}_2(2) + {\mathbf C}_2(\bar{2})
 +{\mathbf C}_2(3).
 \]
 By lemma \ref{grouping}, 
 we have
 \[
 {\mathbf C}_k (t){\mathbf B}_1=
 \begin{cases}
  {\mathbf B}_k & t\in[p]\times[k],\\
  {\mathbf B}_{k+1} & t=(0,k+1)
 \end{cases}
 %
 \]
 which yields
 \[
  {\mathbf B}_k{\mathbf B}_1 
  = 
  pk {\mathbf B}_k + {\mathbf B}_{k+1}.
 \]
(2)
From
the identity derived in (1) we have, inductively, 
\[
{\mathbf B}_{k} 
= 
{\mathbf B}_{k-1}\left({\mathbf B}_1 - p(k-1){\bf I}\right)
=
{\mathbf B}_{1}
\left({\mathbf B}_1 - p{\bf I}\right)
\left({\mathbf B}_1 - 2p{\bf I}\right)
\cdots
\left({\mathbf B}_1 - (k-1)p{\bf I}\right), 
\quad
k=1, 2, \cdots, n.
\]
Second identity in (2) 
follows similarly, by noting 
${\bf 0} = {\bf B}_n ( {\bf B}_1 - np {\bf I})$.
 \QED
\end{proof}
Let 
${k \brack a}$
(resp. ${k \brace a}$), 
where 
$k, a$
are non negative integers, 
be the Stirling number of the first kind
(resp. the second kind)
defined respectively by 
\beq
{k + 1 \brack a}
&=&
k 
{k \brack a}
+
{k \brack a-1}, 
\quad
a \ge 1, 
\quad
{0 \brack a}
=
1(a=0)
\\
 {k+1\brace a} &=& a
 {k \brace a} + {k \brace a-1}, 
 \quad
 a\ge 1, 
\quad
{0 \brace a}
=
1(a=0).
\eeq
Then 
we can express 
${\bf B}_1^k$
as a linear combination of 
${\bf B}_a$'s 
in terms of the Stirling numbers of the second kind.
Since 
these numbers defined above are the M\"obius function each other, the other way around is also possible.
%
%
 \begin{theorem}
  \label{thm:B1power}
  \begin{eqnarray}
  \label{eq:b1power}
  {\bf B}_1^k 
  &=&
  \sum_{a=0}^{n \wedge k}p^{k-a}
  { k \brace a }
  {\bf B}_a, 
  \quad
  k = 0, 1, \cdots
  \\
  {\mathbf B}_a 
  &=& 
  \sum_{i=0}^a (-p)^{a-i}{a\brack i}{\mathbf B}_1^i,
  \quad
  a = 0, 1,  \cdots, n.
 \end{eqnarray}
 with the convention that 
 ${\bf B}_1^0 = {\bf I}$ 
 and 
 $n \wedge k := \min \{ n, k \}$. 
 \end{theorem}
For proof, 
we introduce
\beq
(x)_{n, p}
&:=&
x (x-p) (x-2p) \cdots 
\bigl(
x - (n-1)p
\bigr)
\eeq
and show basic identities. 
\begin{lemma}
\label{lem:firstsecond}
\beq
&(1)&\quad
(x)_{n, p}
=
\sum_{k=0}^n 
(-p)^{n-k}
{ n \brack k }
x^k
\\
&(2)& \quad
x^n
=
\sum_{k=0}^n
p^{n-k} 
{n \brace k} 
(x)_{k, p}.
\eeq
\end{lemma}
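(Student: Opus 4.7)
The plan is to reduce both identities to the classical (that is, $p=1$) Stirling identities via a simple homogeneity/rescaling argument, and then, if desired, double-check by induction using the defining recurrences of ${n\brack k}$ and ${n\brace k}$ recalled just before the lemma.

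The key observation is that $(x)_{n,p}$ is homogeneous of degree $n$ in $(x,p)$: from the definition
\[
 (x)_{n,p} = x(x-p)(x-2p)\cdots(x-(n-1)p) = p^{n}\cdot\frac{x}{p}\left(\frac{x}{p}-1\right)\cdots\left(\frac{x}{p}-(n-1)\right) = p^{n}\,(y)_{n},
\]
where $y:=x/p$ and $(y)_{n}$ denotes the ordinary falling factorial. Thus the $p$-deformed identities in the lemma should follow from the well-known classical identities
\[
 (y)_{n}=\sum_{k=0}^{n}(-1)^{n-k}{n\brack k}y^{k},\qquad y^{n}=\sum_{k=0}^{n}{n\brace k}(y)_{k}.
\]

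For (1), I would substitute $y=x/p$ into the first classical identity and multiply both sides by $p^{n}$, which distributes as $p^{n}\cdot(x/p)^{k}=p^{n-k}x^{k}$ on the right, matching exactly the claimed formula. For (2), I would substitute $y=x/p$ into the second classical identity and again multiply by $p^{n}$; on the right this produces $p^{n-k}{n\brace k}\cdot p^{k}(y)_{k}=p^{n-k}{n\brace k}(x)_{k,p}$ by the homogeneity relation, which is exactly the right-hand side of (2).

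As a safety check, I would also give a direct inductive proof using the recurrences ${k+1\brack a}=k{k\brack a}+{k\brack a-1}$ and ${k+1\brace a}=a{k\brace a}+{k\brace a-1}$ stated above, together with the obvious recurrences $(x)_{n+1,p}=(x-np)(x)_{n,p}$ and $x\cdot(x)_{k,p}=(x)_{k+1,p}+kp\,(x)_{k,p}$. Matching coefficients of $x^{k}$ (for (1)) or of $(x)_{k,p}$ (for (2)) on both sides of the induction step reduces exactly to the Stirling recurrences, with all powers of $p$ accounted for by the degree. I do not expect any real obstacle here; the only thing to be careful about is the bookkeeping of the exponents of $p$, which is entirely determined by the homogeneity $\deg x=\deg p=1$ in $(x)_{n,p}$.
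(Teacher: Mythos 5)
Your proposal is correct and matches the paper's own argument: the paper likewise substitutes $(x)_{n,p}=p^n\left(\frac{x}{p}\right)_n$ into the classical Stirling identities and compares powers of $p$. The extra inductive check you mention is unnecessary but harmless.
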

\begin{proof}
It suffices to substitute 
$(x)_{n, p}
=
p^n 
\left(
\dfrac xp
\right)_n$
into the following well-known formulas. 
\beq
(x)_n
&=&
\sum_{k=0}^n
(-1)^{n-k}
{ n \brack k } 
x^k
\\
x^n
&=&
\sum_{k=0}^n
{n \brace k} 
(x)_k.
\eeq
\QED
\end{proof}
Theorem \ref{thm:B1power}
follows immediately from Lemma 
\ref{lem:firstsecond}.
\\
{\it Proof of Theorem \ref{thm:B1power}}\\
By 
Lemma \ref{lem:fundamental}(2), 
we have ${\bf B}_a
=
({\bf B}_1)_{a, p}$
for 
$a =1, 2, \cdots, n$,
and 
$({\bf B}_1)_{a, p}  = {\bf 0}$
for 
$a \ge n+1$.
Taking 
$x = {\bf B}_1$
in Lemma \ref{lem:firstsecond}
yields
\beq
{\bf B}_a
&=&
({\bf B}_1)_{a, p}
=
\sum_{k=0}^a
(-p)^{a-k}
{a \brack k}
{\bf B}_1^k, 
\quad
a=1, 2, \cdots, n
\\
{\bf B}_1^k
&=&
\sum_{a=0}^k
p^{k-a}
{k \brace a} 
({\bf B}_1)_{a, p}
=
\sum_{a=0}^{n \wedge k}
p^{k-a}
{k \brace a}
{\bf B}_a,
\quad
k = 1, 2, \cdots,
\eeq
Besides, 
we can explicitly see that they are valid also for 
$a = 0$
and 
$k = 0$. 
\QED
\\

{\bf  Remark }
{\it 
For given 
$p \in {\bf N}$, 
${k \brack a}_p
:=
p^{k-a} 
{ k \brack a }$ 
and
${k \brace a}_p
:=
p^{k-a} 
{ k \brace a }$
satisfy the recursion equation and the M\"obius relation similar to the usual one, so that we can regard them as a 
$p$-version of the Stirling numbers. 
\beq
&&
{k+1 \brack a}_p
=
pk
{k \brace a}_p
+
{k \brack a-1}_p, 
\quad
{0 \brack 0}_p=1
\\
&&
{k+1 \brace a}_p
=
pa
{k \brace a}_p
+
{k \brace a-1}_p, 
\quad
{0 \brace 0}_p=1
\\
&&
\sum_j
(-1)^{n-j}
{ n \brack j }_p
{ j \brace i }_p
=
\delta_{n, i}.
\eeq
However, 
${ k \brack a }_p$
is different from the Stirling-Frobenius cycle number of parameter 
$p$
which appears in the analysis of a 
$p$-version
of the riffle shuffle
\cite{NakanoSadahiroRiffle, NakanoSadahiroDet}
}
; 
For a generalized riffle shuffle (i.e., the riffle shuffle on 
$G_{n, p}$), 
the multiplicity of eigenvalues are equal to the Stirling-Frobenius cycle number
\cite{NakanoSadahiroDet}.\\

We define the elements 
 ${\bf e}_i$ 
 of the group algebra 
 ${\mathbb Q}G_{n,p}$ 
 by
 \begin{equation}
  \label{eq:idempotent}
   {\bf e}_i =\frac{1}{i!} \sum_{a=i}^{n}\frac{(-1)^{a-i}}{p^a(a-i)!}{\bf B}_a,
 \end{equation}
for 
$i=0,1,\ldots,n$.
Then 
the powers of 
${\bf B}_1$
are expressed in terms of 
$\{ {\bf e}_i \}$.
%

\begin{theorem}
\begin{equation}
\label{eq:Bpower}
{\bf B}_1^k = \sum_{i=0}^n (pi)^k {\bf e}_i, 
\quad
k=0, 1,\ldots,
\end{equation}
\end{theorem}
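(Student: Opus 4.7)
The strategy is to identify the polynomial in ${\bf B}_1$ that gives ${\bf e}_i$ with the $i$-th Lagrange basis polynomial at the nodes $\{0, p, 2p, \ldots, np\}$, and then to lift the resulting polynomial identity in $x$ to the algebra element ${\bf B}_1$ via its minimal polynomial.

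First I would pin down the annihilator of ${\bf B}_1$. By Lemma \ref{lem:fundamental} one has ${\bf B}_n = ({\bf B}_1)_{n,p}$ together with ${\bf B}_n{\bf B}_1 = pn\,{\bf B}_n$, so
\[
(x)_{n+1,p}\Big|_{x={\bf B}_1} \;=\; {\bf B}_n\,({\bf B}_1 - pn\,{\bf I}) \;=\; 0.
\]
Next, substituting ${\bf B}_a = (x)_{a,p}\big|_{x={\bf B}_1}$ into (\ref{eq:idempotent}) shows that ${\bf e}_i = f_i({\bf B}_1)$, where
\[
f_i(x) \;:=\; \frac{1}{i!}\sum_{a=i}^{n} \frac{(-1)^{a-i}}{p^{a}(a-i)!}\,(x)_{a,p}.
\]
The core computation is to verify $f_i(pj) = \delta_{ij}$ for $0 \le i,j \le n$. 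Using the elementary identity $(pj)_{a,p} = p^{a}\,a!\,\binom{j}{a}$ (which vanishes for $a>j$), the sum telescopes to $\binom{j}{i}\sum_{b=0}^{j-i}(-1)^{b}\binom{j-i}{b}$, equal to $\delta_{ij}$ by the binomial theorem. Hence $\{f_i\}_{i=0}^{n}$ is precisely the Lagrange basis of degree $n$ at the nodes $\{0,p,2p,\ldots,np\}$.

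Finally, for any $k\ge 0$ the polynomial $x^{k} - \sum_{i=0}^{n}(pi)^{k}f_i(x)$ vanishes at each of the $n+1$ distinct nodes $pj$, and is therefore divisible by $(x)_{n+1,p}$. Substituting $x={\bf B}_1$ and invoking the annihilation established above yields
\[
{\bf B}_1^{k} \;=\; \sum_{i=0}^{n} (pi)^{k} f_i({\bf B}_1) \;=\; \sum_{i=0}^{n}(pi)^{k}\,{\bf e}_i.
\]
The only genuine calculation is the evaluation $f_i(pj)=\delta_{ij}$; everything else is formal, and the divisibility argument handles $k>n$ in one stroke, thereby avoiding the awkwardness that Theorem \ref{thm:B1power} as stated covers only $k \le n$.
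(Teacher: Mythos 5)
Your proposal is correct, and it takes a genuinely different route from the paper. The paper proves (\ref{eq:Bpower}) by combining the expansion ${\bf B}_1^k=\sum_a p^{k-a}{k\brace a}{\bf B}_a$ of Theorem \ref{thm:B1power} with the alternating-sum formula ${k\brace a}=\sum_{i=0}^a\frac{(-1)^{a-i}}{i!\,(a-i)!}\,i^k$ (extracted from the generating function $\frac{k!}{a!}(e^t-1)^a$), and then swaps the order of summation to read off the coefficients of ${\bf e}_i$. You instead bypass the Stirling numbers entirely: using only Lemma \ref{lem:fundamental} you observe that $({\bf B}_1)_{n+1,p}={\bf B}_n({\bf B}_1-pn{\bf I})=0$, identify ${\bf e}_i=f_i({\bf B}_1)$ with $f_i$ built from the falling factorials $(x)_{a,p}$, verify by the evaluation $(pj)_{a,p}=p^a a!\binom{j}{a}$ and the binomial theorem that $f_i(pj)=\delta_{ij}$, so that the $f_i$ are the Lagrange basis at the nodes $0,p,\dots,np$, and conclude via divisibility of $x^k-\sum_i(pi)^kf_i(x)$ by the annihilating polynomial. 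What your argument buys is twofold: it makes the spectral meaning of the ${\bf e}_i$ transparent (they are visibly the projections attached to the eigenvalues $pi$, and orthogonality ${\bf e}_i{\bf e}_j=\delta_{ij}{\bf e}_i$ comes essentially for free from $f_if_j\equiv\delta_{ij}f_i \bmod (x)_{n+1,p}$), and it handles all $k\ge 0$ uniformly, whereas the paper invokes (\ref{eq:b1power}), stated only for $k\le n$, leaving the (true, but unremarked) extension to $k>n$ implicit. What the paper's route buys is that it stays within the Stirling-number calculus that drives the rest of the article (the mixing-time and cutoff estimates), so the same identity does double duty there. Your computation of $f_i(pj)$ is the standard trinomial-revision/binomial cancellation rather than a telescoping, but the calculation itself is right.
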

%
%
%
\begin{proof}
We use 
the following idendity 
\cite{GarsiaLecNote}
\[
  {k\brace a} =
   [t^k]
   \left(
   \frac{k!}{a!}\left(e^{t}-1\right)^a
   \right). 
\]
In fact,
by Taylor's expansion, 
\beq
\frac{k!}{a!}
\left(
e^{t}-1
\right)^a
&=&
\frac {k!}{a!}
\left(
\frac {t}{1!}
+
\frac {t^2}{2!}
+
\cdots
\right)
\left(
\frac {t}{1!}
+
\frac {t^2}{2!}
+
\cdots
\right)
\cdots
\left(
\frac {t}{1!}
+
\frac {t^2}{2!}
+
\cdots
\right).
\eeq
Taking 
the coefficient of 
$t^k$
leads us to this formula : 
\beq
\lbrack t^k \rbrack
\frac{k!}{a!}
\left(
e^{t}-1
\right)^a
&=&
\sum_{
\substack{
k_1 + \cdots + k_a = k \\ 
k_1, \cdots, k_a \ge 1
}
}
\frac {k!}{
k_1! k_2! \cdots k_a!
}
\cdot
\frac {1}{a!}
=
{k \brace a}.
\eeq
By the binomial theorem, 
\beq
{k \brace a}
&=&
[t^k]
\frac {k!}{a!}
\sum_{i=0}^a
\left(
\begin{array}{c}
a \\ i
\end{array}
\right)
(-1)^{a-i}
e^{it}
=
\sum_{i=0}^a
\frac {(-1)^{a-i}}{i! (a-i)!}
\cdot
i^k.
\eeq
We note that 
this formula is valid also for 
$k=0$. 
Using 
this equation in
(\ref{eq:b1power})
and changing the order of summation yield the conclusion.
We 
note that in 
(\ref{eq:b1power}), the summation 
$\sum_{a=0}^{n \wedge k}$
may be replaced by 
$\sum_{a=0}^n$, 
since
${k \brace a} = 0$
for 
$a > k$.
\beq
{\bf B}_1^k
&=&
\sum_{a=0}^n
p^{k-a}
\sum_{i=0}^a
\frac {(-1)^{a-i}}{i! (a-i)!}
\cdot
i^k
{\bf B}_a
=
\sum_{i=0}^n
(ip)^k
\sum_{a=i}^n
\frac {1}{p^a}
\cdot
\frac {(-1)^{a-i}}
{ i! (a-i) ! }
{\bf B}_a
=
\sum_{i=0}^n
(ip)^k
{\bf e}_i.
\eeq
\QED
\end{proof}
{\bf Remark  }
~The argument 
of proof of Lemma \ref{eigenspace}
and 
eq.(\ref{eq:Bpower})
imply that 
$\{ {\bf e}_i \}$
is the orthogonal idempotents : 
\begin{equation}
\sum_{i=0}^n {\bf e}_i = I, 
\quad
{\bf e}_i  {\bf e}_j = \delta_{i, j} {\bf e}_i.
\label{idempotents}
\end{equation}
\\

The following lemma 
is stated in 
\cite{assaf2010cyclic} 
which can be proved by standard inclusion-exclusion principle
\cite{StanleyEC1}.
\begin{lemma}
{\rm \cite{assaf2010cyclic}}
  Let 
  $D_{n,p}$ 
  be the number of derangements in 
  $G_{n,p}$.
  Then,
  \[
   D_{n,p} 
   = 
   p^n n!\sum_{k=0}^n
   \frac{(-1)^k}{p^k k!}.
  \]
\end{lemma}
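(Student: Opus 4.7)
The plan is to prove this by the standard inclusion-exclusion principle, which is natural because the notion of a fixed point in $G_{n,p}$ is indexed coordinate-wise over $i \in [n]$. For each $i \in [n]$, define
$$A_i := \{(s,\sigma) \in G_{n,p} : s_i = 0 \text{ and } \sigma(i) = i\},$$
so that $D_{n,p} = |G_{n,p}| - \bigl|\bigcup_{i=1}^n A_i\bigr|$. Inclusion-exclusion then gives
$$D_{n,p} = \sum_{S \subseteq [n]} (-1)^{|S|} \Bigl| \bigcap_{i \in S} A_i \Bigr|.$$

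The key computational step is to evaluate $\bigl| \bigcap_{i \in S} A_i \bigr|$ for a fixed subset $S$ of size $k$. If $\sigma(i) = i$ for all $i \in S$, then $\sigma$ preserves $S$ setwise and its restriction to $[n] \setminus S$ is an arbitrary bijection on $[n] \setminus S$, contributing $(n-k)!$ possibilities. The constraints $s_i = 0$ for $i \in S$ leave the remaining entries $s_j$ ($j \notin S$) free in $C_p$, contributing $p^{n-k}$ possibilities. Hence $\bigl| \bigcap_{i \in S} A_i \bigr| = p^{n-k}(n-k)!$, depending only on $k=|S|$.

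Grouping subsets by their cardinality and using $\binom{n}{k}(n-k)! = n!/k!$ yields
$$D_{n,p} = \sum_{k=0}^n (-1)^k \binom{n}{k} p^{n-k}(n-k)! = n! \sum_{k=0}^n \frac{(-1)^k p^{n-k}}{k!} = p^n n! \sum_{k=0}^n \frac{(-1)^k}{p^k k!},$$
which is the desired closed form.

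I do not anticipate any serious obstacle: this is a direct generalization of the classical derangement argument, the only new feature being that each fixed-point condition now pins not only $\sigma(i) = i$ but also the color $s_i = 0$, producing the extra factor $p^{n-k}$ in the intersection count. One minor point worth checking carefully is that the two conditions $s_i = 0$ and $\sigma(i) = i$ are independent in the counting, which is clear from the direct product description $G_{n,p} = C_p^n \times \mathfrak{S}_n$ as a set, even though the group structure is a nontrivial wreath product.
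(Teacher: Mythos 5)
Your proof is correct and is exactly the argument the paper has in mind: the paper does not write out a proof, merely noting that the formula follows from ``the standard argument of the inclusion and exclusion principle'' and citing the reference, and your coordinate-wise sets $A_i$ with $\bigl|\bigcap_{i\in S}A_i\bigr| = p^{n-k}(n-k)!$ is precisely that standard argument carried out. Nothing further is needed.
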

{\it Proof of Theorem $\ref{thm:main}$ }
Let 
 $E_i$ 
 be the matrices defined by
 \[
  E_i = L(e_i)
 \]
  for 
  $i=0,1,\ldots,n$.
 By transforming 
 both sides of eq.
 $(\ref{eq:Bpower})$ 
 by 
 $L$,
 powers of 
 $L( {\bf B}_1 )$
 can be represented in terms of 
 $E_i$'s. 
\[
 L({\bf B}_1)^k
 =
 \sum_{i=0}^n
 (pi)^k E_i. \]
Then 
by Lemma \ref{eigenspace} in Appendix it follows that  
$\{ (ip) \}_{i=0}^n$
are the eigenvalues and the range 
$Ran \, E_i$
of 
$E_i$ 
(if it is nonzero)
are the corresponding eigenspaces.
 Since each 
 ${\bf B}_a$ 
 contains exactly one identity permutation,
 we have 
 ${\rm Trace }\, L({\mathbf B}_a) = |G_{n,p}| = p^nn!$.
 Then 
 we compute
 \begin{eqnarray*}
  {\rm Trace} \;
  E_i 
  & = & 
  \frac{1}{i!}
  \sum_{a=i}^n
  \frac{(-1)^{a-i}}{p^a(a-i)!}
  p^nn!
 =
  {n\choose i}
  p^{n-i}(n-i)!
  \sum_{b=0}^{n-i}
  \frac{(-1)^{b}}{p^{b}b!}
  =
  {n\choose i}
  D_{n-i,p}
 \end{eqnarray*}
 which is the number
 of elements of 
 $G_{n,p}$ 
 with 
 $i$ 
 fixed points.
\QED\\

We 
have analogous formulas for 
${\bf B}_a$. 
%

\begin{corollary}
 \begin{equation}
  \label{eq:BapowerE}
 {\mathbf B}_a^k = \sum_{i=a}^n\left(p^aa!{i\choose a}\right)^k {\bf e}_i, 
 \quad
 k = 0, 1, \ldots
 \end{equation}
 Therefore, 
 the eigenvalues of  
 $L({\mathbf B}_a)$ 
 are
 \[
  0,p^aa!,~p^aa!{a+1\choose a},p^aa!{a+2\choose a},\ldots, p^aa!{n\choose a}.
 \]
The multiplicity 
of the eigenvalue 
$p^aa!{i\choose a}$ 
is same as that of the eigenvalues of 
${\bf B}_1$. 
\end{corollary}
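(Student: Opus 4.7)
The plan is to reduce everything to the spectral decomposition of $\mathbf{B}_1$ already established, via the factorization from Lemma \ref{lem:fundamental}. By that lemma we have the polynomial identity $\mathbf{B}_a = (\mathbf{B}_1)_{a,p} = \mathbf{B}_1(\mathbf{B}_1 - p\mathbf{I})\cdots(\mathbf{B}_1 - (a-1)p\mathbf{I})$ in the commutative subalgebra generated by $\mathbf{B}_1$. So the first step is to evaluate an arbitrary polynomial in $\mathbf{B}_1$ through the idempotent decomposition.

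Specializing equation (\ref{eq:Bpower}) to $k=1$ gives $\mathbf{B}_1 = \sum_{i=0}^n (pi)\, \mathbf{e}_i$, and since $\{\mathbf{e}_i\}$ are orthogonal idempotents with $\sum_i \mathbf{e}_i = \mathbf{I}$, any polynomial $f$ satisfies $f(\mathbf{B}_1) = \sum_{i=0}^n f(pi)\, \mathbf{e}_i$. Applying this to $f(x) = (x)_{a,p}$ yields
\[
\mathbf{B}_a
= \sum_{i=0}^n (pi)_{a,p}\, \mathbf{e}_i
= \sum_{i=0}^n p^a\, i(i-1)\cdots(i-a+1)\, \mathbf{e}_i
= \sum_{i=a}^n p^a a! \binom{i}{a}\, \mathbf{e}_i,
\]
where the lower limit jumps to $a$ because $i(i-1)\cdots(i-a+1) = 0$ for $0 \le i < a$. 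Raising this to the $k$-th power and exploiting $\mathbf{e}_i \mathbf{e}_j = \delta_{ij} \mathbf{e}_i$ collapses the multinomial expansion to the diagonal, giving formula (\ref{eq:BapowerE}).

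For the spectral consequences, I apply the left regular representation $L$ termwise: $L(\mathbf{B}_a) = \sum_{i=a}^n p^a a! \binom{i}{a}\, E_i$, where $E_i = L(\mathbf{e}_i)$ are mutually orthogonal projections summing to the identity (because $L$ is an algebra homomorphism). Hence the distinct eigenvalues are exactly the values $p^a a! \binom{i}{a}$ for $i = a, a+1, \ldots, n$, together with $0$ contributed by $E_0, \ldots, E_{a-1}$, and the multiplicity of $p^a a! \binom{i}{a}$ equals $\operatorname{rank} E_i = \operatorname{Trace} E_i = \binom{n}{i} D_{n-i,p}$, which is the same value computed in the proof of Theorem \ref{thm:main}.

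There is no real obstacle here: the main content is the observation that $(pi)_{a,p} = p^a a!\binom{i}{a}$ vanishes precisely for $i < a$, which is what shifts the spectrum and produces a kernel of dimension $\sum_{i=0}^{a-1} \binom{n}{i} D_{n-i,p}$ contributing to the eigenvalue $0$. The only point requiring mild care is that the multiplicity statement is phrased relative to $\mathbf{B}_1$, and one must note that distinct $i$'s could in principle produce the same value $p^a a!\binom{i}{a}$; however since $\binom{i}{a}$ is strictly increasing in $i$ for $i \ge a$, the nonzero eigenvalues are all distinct and the multiplicities match directly, as claimed.
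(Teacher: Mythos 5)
Your proposal is correct, but it takes a different route from the paper. The paper proves the key identity $\mathbf{B}_a=\sum_{i=a}^n p^a a!\binom{i}{a}\,\mathbf{e}_i$ by inverting the defining relation (\ref{eq:idempotent}): it writes $\mathbf{e}_i=\sum_{b\ge i}\frac{1}{p^i i!}[x^{b-i}](e^{-x/p})\,\mathbf{B}_b$, observes that the ``reciprocal'' coefficients $i!\,p^i[x^{i-a}](e^{x/p})=p^a a!\binom{i}{a}$ satisfy the convolution identity $\sum_i [x^{i-a}](e^{x/p})\,[x^{b-i}](e^{-x/p})=\delta_{a,b}$, and sums against these coefficients to recover $\mathbf{B}_a$; the power formula then follows from orthogonal idempotency, exactly as in your last step. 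You instead start from Lemma \ref{lem:fundamental}, $\mathbf{B}_a=(\mathbf{B}_1)_{a,p}$, and evaluate this polynomial through the spectral decomposition $\mathbf{B}_1=\sum_i (pi)\,\mathbf{e}_i$, using $(pi)_{a,p}=p^a a!\binom{i}{a}$ (zero for $i<a$). This is shorter and more conceptual — it makes transparent why the spectrum of $\mathbf{B}_a$ is the image of the spectrum of $\mathbf{B}_1$ under $x\mapsto (x)_{a,p}$ and why a kernel of dimension $\sum_{i<a}\binom{n}{i}D_{n-i,p}$ appears — at the cost of invoking the orthogonal idempotency of $\{\mathbf{e}_i\}$ already for the expansion of $\mathbf{B}_a$ itself (though one can even avoid that there: expanding $(x)_{a,p}$ in powers of $x$ and applying (\ref{eq:Bpower}) term by term gives the same identity by linearity alone). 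The paper's inversion argument buys an explicit generating-function mechanism that generalizes to expressing any $\mathbf{B}_a$ in the $\mathbf{e}_i$ basis without appealing to the factorization lemma. Your added remarks — that rank $E_i=\operatorname{Trace}E_i$ because the $E_i$ are projections, and that the nonzero eigenvalues $p^a a!\binom{i}{a}$ are pairwise distinct since $\binom{i}{a}$ is strictly increasing in $i\ge a$ — are points the paper leaves implicit, and they are needed for the multiplicity statement to read off correctly.
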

\begin{proof}
It suffices to show 
  \[
  {\mathbf B}_a 
  = 
  \sum_{i=a}^np^aa!{i\choose a}{\bf e}_i, 
  \quad
  a = 0, 1, \cdots, n.
  \]
and then use eq.(\ref{idempotents}).
In order for that, 
we aim to express 
${\bf B}_a$
in terms of 
${\bf e}_i$'s
by using eq.
(\ref{eq:idempotent})
\begin{equation}
{\bf e}_i
=
\sum_{b=i}^n
\frac {1}{ p^i i! }
\frac {(-1)^{b-i}}
{
p^{b-i} (b-i)! 
}
{\bf B}_b
=
\sum_{b=i}^n
\frac {1}{ p^i i! }
[ x^{b-i} ]
\left(
e^{- \frac xp}
\right)
{\bf B}_b.
\label{eq:one}
\end{equation}
The 
``reciprocal" 
of these coefficients are equal to 
\beq
i! p^i
[ x^{i-a} ]
\left(
e^{\frac xp} 
\right)
&=&
i ! p^i
\frac {1}{ (i-a)! }
[ x^{i-a} ]
\left(
\frac xp 
\right)^{i-a}
=
p^a a!
{ i \choose a }
\eeq
which satisfy 
\beq
&&
\sum_{i = a}^b
\left(
e^{\frac xp}
\right) 
[ x^{ i- a } ] 
\left(
e^{ - \frac xp}
\right) 
[ x^{ b - i } ] 
=
\left(
e^{ \frac xp }
\cdot
e^{- \frac xp}
\right)
[ x^{b - a} ]
=
\delta_{a, b}.
\eeq
Thus, 
applying  
$\sum_{i=a}^n 
i! p^i 
[ x^{i-a} ] 
\left(
e^{\frac xp} 
\right)
$
on both sides of 
(\ref{eq:one}) 
yields 
\beq
\sum_{i=a}^n 
i! p^i 
\left(
e^{\frac xp} 
\right)
[ x^{i-a} ] 
{\bf e}_i
&=&
\sum_{i=a}^n 
i! p^i 
\left(
e^{\frac xp} 
\right)
[ x^{i-a} ] 
\sum_{b=i}^n
\frac {1}{ p^i i! }
\left(
e^{- \frac xp}
\right)
[ x^{b-i} ]
{\bf B}_b
\\
\sum_{i=a}^n
p^a a! 
{ i \choose a }
{\bf e}_i
&=&
\sum_{i=a}^n 
\sum_{b=i}^n
\left(
e^{\frac xp} 
\right)
[ x^{i-a} ] 
\left(
e^{- \frac xp}
\right)
[ x^{b-i} ]
1 
\left(
0 \le a \le i \le b \le n
\right)
{\bf B}_b
\\
&=&
\sum_{ b=a }^n
\sum_{i=a}^b
\left(
e^{\frac xp} 
\right)
[ x^{i-a} ] 
\left(
e^{- \frac xp}
\right)
[ x^{b-i} ]
{\bf B}_b
\\
&=&
\sum_{b=a}^n
\delta_{a, b} {\bf B}_b
=
{\bf B}_a.
\eeq
\QED
\end{proof}
Here, 
we introduce the {\em generalized Stirling} numbers, which arise in the Boson normal ordering problem
\cite{BPS}.
%
\beq
S_{r, s}(n, k)
&:=&
\frac {(-1)^k}{k!}
\sum_{p=s}^k
(-1)^p
{ k \choose p }
\prod_{j=1}^n
\Bigl(
p + (j-1)(r-s)
\Bigr)^{ \underline{s} }
\\
where
\quad
m^{ \underline{s} }
&:=&
m (m-1) \cdots (m-s+1).
\eeq
Then 
we obtain 
${\bf B}_a$-analogue of Theorem \ref{thm:B1power}.
%
%
\begin{theorem}
 \[
 {\bf  B}_a^k 
 = 
 \sum_{b=a}^n p^{ka-b}S_{a,a}(k,b){\mathbf B}_b.
 \]
\end{theorem}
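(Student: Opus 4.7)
The plan is to derive the claim from the spectral expansion for ${\bf B}_a^k$ obtained in the previous corollary, combined with the defining formula (\ref{eq:idempotent}) for the idempotents ${\bf e}_i$ in terms of the ${\bf B}_b$'s. Since the corollary gives
$${\bf B}_a^k = \sum_{i=a}^n \left(p^a a!\binom{i}{a}\right)^k {\bf e}_i,$$
substituting (\ref{eq:idempotent}) for each ${\bf e}_i$ rewrites ${\bf B}_a^k$ as a double sum over $i$ and $b$ of terms proportional to ${\bf B}_b$. All that remains is to interchange the order of summation and identify the resulting inner sum as the generalized Stirling number $S_{a,a}(k,b)$ times the appropriate power of $p$.

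Concretely, after substitution and interchange I expect to obtain
$${\bf B}_a^k = \sum_{b=a}^n p^{ka-b}\,{\bf B}_b\;\frac{(-1)^b}{b!}\sum_{i=a}^{b}(-1)^i\binom{b}{i}\bigl(i^{\underline{a}}\bigr)^k,$$
using the identity $(a!)^k \binom{i}{a}^k = (i^{\underline{a}})^k$ and pulling the factor $\frac{(-1)^b}{b!}$ out of the $i$-sum. So the work reduces to checking that the bracketed quantity equals $S_{a,a}(k,b)$.

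The matching step is where I would spend most of the care. Specializing the definition of $S_{r,s}(n,k)$ at $r=s=a$, each factor $p+(j-1)(r-s)$ collapses to $p$, so the product $\prod_{j=1}^{p}\bigl(p+(j-1)(r-s)\bigr)^{\underline{s}}$ reduces (once the indexing is read as intended, i.e.\ a $k$-fold product of $p^{\underline a}$'s coming from the boson normal-ordering setup) to $\bigl(p^{\underline{a}}\bigr)^k$. With this reading,
$$S_{a,a}(k,b) = \frac{(-1)^b}{b!}\sum_{p=a}^b (-1)^p \binom{b}{p}\bigl(p^{\underline{a}}\bigr)^k,$$
which is exactly the inner sum produced above, and the theorem follows.

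The main obstacle is not the algebra of the double sum — that is pure bookkeeping once one is careful about the range $a\le i\le b\le n$ — but rather verifying that the specialization $r=s=a$ of the definition of $S_{r,s}(n,k)$ indeed produces the sum $\frac{(-1)^b}{b!}\sum (-1)^p\binom{b}{p}(p^{\underline a})^k$. Once this identification is pinned down (and it is consistent with the Blasiak--Penson--Solomon conventions cited), the rest is immediate: the spectral formula for ${\bf B}_a^k$ together with (\ref{eq:idempotent}) mechanically produces the coefficient $p^{ka-b}S_{a,a}(k,b)$ on ${\bf B}_b$, completing the proof.
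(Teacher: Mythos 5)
Your proposal is correct and follows essentially the same route as the paper: substitute the expansion ${\bf e}_i=\sum_{b=i}^n \frac{1}{p^b}\frac{(-1)^{b-i}}{b!}{b \choose i}{\bf B}_b$ into the corollary's formula ${\bf B}_a^k=\sum_{i=a}^n\bigl(p^aa!{i\choose a}\bigr)^k{\bf e}_i$, interchange the sums, and recognize the inner sum $\frac{(-1)^b}{b!}\sum_{i=a}^b(-1)^i{b\choose i}\bigl(a!{i\choose a}\bigr)^k$ as $S_{a,a}(k,b)$ via $i^{\underline{a}}=a!{i\choose a}$. Your reading of the specialization $r=s=a$ (with the product taken $k$-fold, i.e. over the first argument, as in the Blasiak--Penson--Solomon convention) is exactly the identification the paper itself uses.
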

\begin{proof}
Using 
$p^{ \underline{a} }
=
a!
{ p \choose a }$
in the definition, we have 
\beq
S_{a,a}(k,b)
&=&
\frac {(-1)^b}{b!}
\sum_{i=a}^b
(-1)^i
{ b \choose i }
\Bigl(
a! 
{ i \choose a }
\Bigr)^k.
\eeq
Using 
\beq
{\bf e}_i
&=&
\frac {1}{i!}
\sum_{b=i}^n
\frac {(-1)^{b-i}}
{
p^b (b-i)!
}
{\bf B}_b
=
\sum_{b=i}^n
\frac {1}{p^b}
\frac {(-1)^{b-i}}{b!}
{ b \choose i }
{\bf B}_b
\eeq
in eq. 
(\ref{eq:BapowerE})
yields 
\beq
{\bf B}_a^k
&=&
\sum_{i=a}^n
\Bigl(
p^a a! 
{ i \choose a }
\Bigr)^k
\sum_{b=i}^n
\frac {1}{p^b}
\frac {(-1)^{b-i}}{b!}
{ b \choose i }
{\bf B}_b
\\
&=&
\sum_{b=a}^n
\frac {p^{ka-b}}{b!}
\sum_{i=a}^b
(-1)^{b-i}
{ b \choose i }
\Bigl(
a! 
{ i \choose a }
\Bigr)^k
{\bf B}_b
\\
&=&
\sum_{b=a}^n
p^{ka-b}
S_{a,a} (k, b)
{\bf B}_b. 
\eeq
\QED
\end{proof}
{\bf Remark }
{\it 
An asymptotic formula for 
$S_{a,a}(k,b)$
would yield a cut off statement for the shuffles corresponding to 
${\bf B}_a$.
}
%

\section{Mixing time}
In this section, 
we consider the mixing time of the top to random shuffle on the colored permutation.
Although 
the state space of the Markov chain is $p^n$ 
times larger than the ordinary top to random shuffle on 
${\mathfrak S}_n$,
it turns out that the mixing time does not differ significantly ; in fact, our bound on the mixing time is independent of 
$p$, 
for 
$p > 1$. 
Let 
${\mathbf B}
=
\sum_{w\in G_{n,p}} c_w w \in {\mathbb Q}G_{n,p}$ 
be an element of the group algebra 
${\mathbb Q}G_{n,p}$.
We define the 
$L^1$-norm 
$|{\mathbf B}|$ 
of 
${\mathbf B}$ 
by
\[
|{\mathbf B}| 
= 
\sum_{w\in G_{n,p}}|c_w|.
\]
Then 
it can easily be confirmed
\[
 |{\mathbf B}_a| = {n\choose a}p^aa!.
\]
and since these elements in 
$G_{n, p}$
consisting of 
${\bf B}_{a-1}$
are contained by those in 
${\bf B}_a$, 
\[
|{\mathbf B}_a-{\mathbf B}_{a-1}| 
=
|{\mathbf B}_a| - |{\mathbf B}_{a-1}|.
\]
A probability distribution 
${\bf P}$ 
over 
$G_{n,p}$ 
can be regarded as an element of 
${\mathbb R}G_{n,p}$, 
i.e., 
${\bf P}$ 
can be expressed as
\[
 {\bf P} 
 = 
 \sum_{w\in G_{n,p}} p_w w,
\]
where 
$p_w\geq 0$ 
and
$\sum_{w\in G_{n,p}}p_w = 1$.
Let 
${\bf P},{\bf Q} \in {\mathbb R}G_{n,p}$ 
be two probability distributions on 
$G_{n,p}$. 
%
Then 
$d_{\rm TV}({\mathbf P}, {\mathbf Q})$ 
is equal to
\[
 d_{\rm TV}({\bf P},{\bf Q}) 
 = 
 \frac{1}{2}
 \left|{\mathbf P}-{\mathbf Q}\right|.
\]
\begin{theorem}
\label{thm:mixingtime}
Let 
$p$ 
be greater than 
$1$ 
and let 
$U$ 
be the uniform distribution over 
$G_{n,p}$, 
that is,
\[
 U = \sum_{w\in G_{n,p}}\frac{1}{p^nn!}w.
\]
(1)
The 
total variation distance between the distribution of 
$k$-repeated top to random shuffle and the uniform distribution is bounded above by 
\begin{eqnarray*}
d_{TV}
\left(
\left(\frac{1}{pn}{\mathbf B}_1\right)^k, 
\,
U
\right)
& \leq &
1-\frac{{k\brace n}n!}{n^k}.
\end{eqnarray*}
(2)
Let 
$\epsilon > 0$.
Then 
$d_{TV}
\left(
\left(
\dfrac{1}{pn}{\mathbf B}_1
\right)^k, 
\,
U
\right)
<
\epsilon$
for 
$k\geq n\log n 
+ 
n\log\dfrac{-1}{\log(1-\varepsilon)}$.
\end{theorem}
To prove Theorem 
\ref{thm:mixingtime}, 
we need 
the 
following lemma which gives us the TV distance between the distribution of 
$k$-repeated top to random shuffles and the uniform distribution. 
\begin{lemma}
\label{lem:TV}
Let 
\beq
A 
&:=&
\min
\left\{
a \, \middle| \,
\frac {1}{p^n n!}
>
\frac {1}{(np)^k}
\sum_{b=a}^n
p^{k-b}
\left\{
\begin{array}{c}
k \\ b
\end{array}
\right\}
\right\}.
\eeq
Then 
\beq
d_{TV}
\left(
\left(
\frac {1}{ pn }
B_1
\right)^k, 
U
\right)
& = &
\sum_{a \ge A}
\left(
\frac {1}{ p^n n! }
-
\frac {1}{ p^k n^k }
\sum_{b = a }^n
p^{k-b}
{ k \brace b }
\right)
(
|{\bf B}_a| - | {\bf B}_{a-1} |
).
\eeq
\end{lemma}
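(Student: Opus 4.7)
The plan is to combine the explicit expansion of ${\bf B}_1^k$ from Theorem \ref{thm:B1power} with a careful identification of which words of $G_{n,p}$ occur in each ${\bf B}_a$, so that the measure $\left(\frac{1}{pn}{\bf B}_1\right)^k$ can be compared with the uniform $U$ point-by-point. The key preliminary observation is that each ${\bf B}_a$ is a $0/1$-valued indicator sum on $G_{n,p}$; once this is in hand, the probability of every $w$ will depend only on a single statistic $r(w)$, and the total variation calculation reduces to grouping mass by the value of $r(w)$.

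For the first step, recall that for $1 \le a \le n-1$ we have ${\bf B}_a = \sum_{\alpha \in G_{a,p}} \alpha \shuffle W_{a,n}$ with $W_{a,n} = (0,a+1)(0,a+2)\cdots(0,n)$. The letters of $W_{a,n}$ are pairwise distinct, so for any $w \in G_{n,p}$ the positions of a $W_{a,n}$-subsequence inside $w$ are forced: they must be the (at most one) positions carrying the letters $(0,a+1), (0,a+2), \ldots, (0,n)$, and these positions must occur in increasing order of the values. Consequently $w$ appears in $\alpha \shuffle W_{a,n}$ for at most one $\alpha$ (the one obtained by deleting those positions from $w$) and then with multiplicity exactly $1$. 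Defining $r(w)$ as the smallest $a \in \{0,1,\ldots,n\}$ such that (i) every value in $\{a+1,\ldots,n\}$ has color $0$ in $w$ and (ii) these values appear in $w$ in increasing order of the values, we conclude that the coefficient of $w$ in ${\bf B}_a$ equals $1$ if $a \ge r(w)$ and $0$ otherwise. Since the coefficients are all $0$ or $1$ and ${\bf B}_{a-1} \subseteq {\bf B}_a$, the quantity $|{\bf B}_a| - |{\bf B}_{a-1}|$ counts exactly the $w \in G_{n,p}$ with $r(w) = a$.

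Plugging this into Theorem \ref{thm:B1power} gives, for each $w \in G_{n,p}$, that the coefficient of $w$ in $(pn)^{-k}{\bf B}_1^k$ is
\[
P_k(w) \;=\; \frac{1}{(pn)^k}\sum_{b=r(w)}^n p^{k-b}{k\brace b},
\]
which depends only on $r(w)$ and is non-increasing in $r(w)$. The definition of $A$ is precisely the smallest $a$ for which $P_k(w) < U(w)$ when $r(w) = a$, so monotonicity yields $P_k(w) \ge U(w)$ whenever $r(w) < A$ and $P_k(w) < U(w)$ whenever $r(w) \ge A$. Applying the standard identity $d_{TV}(P_k,U) = \sum_{w:\,P_k(w) \le U(w)}(U(w) - P_k(w))$ and grouping by the value of $r(w)$ yields
\[
d_{TV} \;=\; \sum_{a \ge A}(|{\bf B}_a| - |{\bf B}_{a-1}|)\left(\frac{1}{p^n n!} - \frac{1}{(pn)^k}\sum_{b=a}^n p^{k-b}{k\brace b}\right),
\]
which is the claimed formula.

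The main obstacle is the first step: proving that the coefficients of each ${\bf B}_a$ on $G_{n,p}$ are $0$ or $1$ and pinning down the statistic $r(w)$. The set-theoretic containment ${\bf B}_{a-1} \subseteq {\bf B}_a$ is already observed in the text, but for the argument above one must verify uniqueness both of the partition of $w$ into two complementary subsequences and of the $\alpha \in G_{a,p}$ producing $w$ in the shuffle, which relies on the distinctness of the letters of $W_{a,n}$. Once this combinatorial fact is secured, the rest is a routine rearrangement of total variation mass.
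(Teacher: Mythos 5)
Your proof is correct and follows essentially the same route as the paper: your statistic $r(w)$ is exactly the pointwise description of the paper's decomposition $\left(\frac{1}{pn}{\bf B}_1\right)^k=\sum_a x_a\,({\bf B}_a-{\bf B}_{a-1})$ and $U=\sum_a y_a\,({\bf B}_a-{\bf B}_{a-1})$, followed by the same monotone-tail-sum argument identifying $A$ as the threshold where the uniform weight overtakes the shuffle weight. The only real difference is that you explicitly verify the $0/1$-coefficient structure of ${\bf B}_a$ via uniqueness of the shuffle decomposition, a fact the paper merely asserts when writing $|{\bf B}_a-{\bf B}_{a-1}|=|{\bf B}_a|-|{\bf B}_{a-1}|$.
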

\begin{proof}
Let 
\beq
{\bf C}_a 
&:=& {\bf B}_a - {\bf B}_{a-1}, 
\;
{\bf C}_0 := {\bf B}_0.
\\
x_a
&:=&\frac {1}{
( pn )^k }
\sum_{b=a}^n
p^{k-b}
{ k \brace b }, 
\quad
a = 0, 1, \cdots, n. 
\eeq
Then 
\beq
\left(
\frac {1}{ pn }
{\bf B}_1
\right)^k
&=&
\frac {1}{
( pn )^k }
\sum_{b=1}^n
p^{k-b}
{ k \brace b }
\sum_{a=1}^b
{\bf C}_a
+
\frac {1}{
( pn )^k }
\sum_{b=1}^n
p^{k-b}
{ k \brace b }
{\bf C}_0
\\
&=&
\sum_{a=1}^n
\frac {1}{
( pn )^k }
\sum_{b=a}^n
p^{k-b}
{ k \brace b }
{\bf C}_a
+
\frac {1}{
( pn )^k }
\sum_{b=0}^n
p^{k-b}
{ k \brace b }
{\bf C}_0
\\
&=&
\sum_{a=0}^n
x_a 
{\bf C}_a, 
\quad
k = 0, 1, \cdots, 
\eeq
Where 
we used 
${ k \brace 0 } = 0$ 
for 
$k \ne 0$.
Similarly, using 
${\bf B}_n
=
\sum_{a=0}^n {\bf C}_a$
we have 
\beq
U
&=&
\sum_{a=0}^n
y_a {\bf C}_a,
\quad
y_a := 
\frac {1}{
n! p^n
}.
\eeq
Hence 
\beq
d_{TV}
\left(
\left(
\frac {1}{ pn }
{\bf B}_1
\right)^k, 
U
\right)
&=&
\frac 12
\sum_{a=0}^n
| x_a - y_a | | {\bf C}_a |
=
\sum_{a \, : \, y_a > x_a}
(x_a - y_a ) | {\bf C}_a |
\eeq
where the second equality follows from the fact that 
$\sum_{a=0}^n y_a | {\bf C}_a |
=
\sum_{a=0}^n x_a | {\bf C}_a |
= 1$
(Proposition 4.2 \cite{Peres}).
Since 
$x_a$
is monotonically decreasing and 
$y_a$
is constant, and since 
$A :=
\min 
\{ a \, | \, 
y_a > x_a 
\}$, 
we have 
\beq
d_{TV}
\left(
\left(
\frac {1}{ pn }
{\bf B}_1
\right)^k, 
U
\right)
&=&
\sum_{a \ge A}
(y_a - x_a) | {\bf C}_a |
\\
& = &
\sum_{a \ge A}
\left(
\frac {1}{ p^n n! }
-
\frac {1}{ p^k n^k }
\sum_{b = a }^n
p^{k-b}
{ k \brace b }
\right)
(
|{\bf B}_a| - | {\bf B}_{a-1} |
). 
\eeq
\QED
\end{proof}
%
%
%
%
{\it Proof of Theorem \ref{thm:mixingtime} }
(1)
By Theorem  
\ref{thm:B1power}, 
we have 
  \[
  \left(
  \frac{1}{pn}{\mathbf B}_1
  \right)^k
  =
  \frac{1}{p^kn^k}
  \sum_{a=1}^n p^{k-a}{k\brace a}{\mathbf B}_a.
  \]
  Therefore, 
  if we let 
  $A$ 
  be the smallest of the integers 
  $a$ 
  such that
  \[
   \frac{1}{p^nn!} 
   >
   \frac {1}
    {p^kn^k}
  \sum_{b=a}^np^{k-b}{k\brace b},
  \]
  %
we have, by Lemma \ref{lem:TV}, 
\begin{eqnarray*}
d_{\rm TV}
\left(
\left(\frac{1}{pn}{\mathbf B}_1
\right)^k,
U
\right)
& = &
\sum_{a \ge A}
\left(
\frac {1}{ p^n n! }
-
\frac {1}{ p^k n^k }
\sum_{b = a }^n
p^{k-b}
{ k \brace b }
\right)
(
|B_a| - | B_{a-1} |
)
\\
& \leq &
\sum_{a\geq A}
\left(
\frac{1}{p^nn!}
-
\frac {1}{p^kn^k}
\cdot
p^{k-n}{k\brace n}
\right)
\left(|{\mathbf B}_a|-|{\mathbf B}_{a-1}|
\right)\\\\
& \leq &
\left(
\frac{1}{p^nn!}
-
\frac {1}{p^kn^k}
\cdot
p^{k-n}{k\brace n}
\right)
|{\mathbf B}_n|
=
1-\frac{{k\brace n}n!}{n^k}.
\end{eqnarray*}
(2)
This immediately follows from 
(1) above and 
Lemma \ref{lem:limit} in Appendix.
\QED
%

\section{Cut off}
In this section
we prove 
Theorem \ref{thm:cutoff}.
First of all, 
it is easy to show the upper bound in Theorem 1.2(1). 
In fact, 
by 
Theorem \ref{thm:mixingtime} and Lemma \ref{lem:limit} 
we have, for 
$k = \left\lfloor n \log n + cn \right\rfloor$, 
\beq
d_{TV}
\left(
\frac {1}{(np)^k}
B_1^k, U
\right)
&\le&
1-
\exp[ -  n e^{- \frac kn}]
(1+o(1))
\stackrel{n \to \infty}{\to}
1 - \exp [ - e^{-c} ].
\eeq
It then 
suffices to compute the Taylor's expansion of 
$\exp [ -e^{-c} ]$
to prove Theorem 1.2(1). 
For the lower bound, let
\beq
X
:=
\lfloor
n - \log n 
\rfloor.
\eeq
We shall 
divide into two cases : 
Case 1 : $A \le X$ 
and 
Case 2 : $X \le A$. 
%
\subsection{
Case 1 : ($A \le X$)
}
We first substitute 
$\sum_{a \ge A}$
for 
$\sum_{a \ge X}$
in the formula in Lemma 
\ref{lem:TV}. 
Using 
$|B_a|
=
p^a a! 
{ n \choose a }$, 
we have 
\begin{eqnarray}
&&
d_{TV}
\left(
\frac {1}{(np)^k}
B_1^k, U
\right)
\nonumber
\\
& \ge &
\sum_{a \ge X}
\left(
\frac {1}{p^n n!}
-
\frac {1}{(np)^k}
\sum_{b=a}^n
p^{k-b}
\left\{
\begin{array}{c}
k \\ b
\end{array}
\right\}
\right)
\left(
|B_a| - |B_{a-1}|
\right)
\nonumber
\\
& \ge &
\sum_{a \ge X}
\left(
\frac {1}{p^n n!}
-
\frac {1}{(np)^k}
\sum_{b=X}^n
p^{k-b}
\left\{
\begin{array}{c}
k \\ b
\end{array}
\right\}
\right)
\left(
|B_a| - |B_{a-1}|
\right)
\nonumber
\\
&=&
\left(
1
-
\frac {n!}{n^k}
\sum_{b=X}^n
p^{n-b}
\left\{
\begin{array}{c}
k \\ b
\end{array}
\right\}
\right)
\left(
1
-
\frac {1}{
(n - X + 1)!
\cdot
p^{n-X+1}
}
\right)
\label{sharp}
\\
&=:&
(1-C)(1-D).
\nonumber
\end{eqnarray}
We aim to show 
$C, D = {\cal O}(n^{-\alpha})$
below. 
%

\subsubsection{
Estimate for  
$C$}
We use 
the following property of the Stirling numbers of the second kind \cite{Dobson}: for given 
$k \in {\bf N}$, 
we can uniquely find 
$r_k$
such that 
\beq
\left\{
\begin{array}{c}
k \\ 1
\end{array}
\right\}
<
\left\{
\begin{array}{c}
k \\ 2 
\end{array}
\right\}
< \cdots < 
\left\{
\begin{array}{c}
k \\ r_k 
\end{array}
\right\}
\ge 
\left\{
\begin{array}{c}
k \\ r_k+1 
\end{array}
\right\}
>\cdots > 
\left\{
\begin{array}{c}
k \\ k 
\end{array}
\right\}.
\eeq
Moreover 
by eq.(1.6) in 
\cite{Menon}, 
$r_k$
satisfies 
\beq
r_k
&=&
\frac {k}{ \log k}
+
{\cal O}
\left(
k (\log k)^{- \frac 32}
\right).
\eeq
Taking 
$k
=
n \log n - n \cdot c_n$
yields
\beq
\frac {k}{\log k}
=
\frac {
n \log n 
\left(
1 - \frac {c_n}{n}
\right)
}
{
\log n + \log \log n 
+
\log 
\left(
1 - \frac {c_n}{n}
\right)
}.
\eeq
Thus 
$r_k$
quite likely is contained in the sum 
$\sum_{b = X}^n$
in eq.(\ref{sharp}).
Therefore 
we further divide into three cases, 
according to the large and small relationship, that is, 
Case (i)
$X \le n \le r_k$, 
Case (ii)
$X \le r_k \le n$, 
and 
Case (iii)
$r_k \le X \le n$.
%
%
\paragraph{
Case (i)
$X \le n \le r_k$ : 
}
Since 
$X \le b \le n$, 
we have 
${ k \brace b } \le { k \brace n }$.
Lemma \ref{lem:bound} 
and the equation  
$e^{- k/n}
=
e^{- \log n + c}
=
e^c / n$
yield 
\beq
\frac {n!}{n^k}
\sum_{b=X}^n
p^{n-b}
\left\{
\begin{array}{c}
k \\ b 
\end{array}
\right\}
& \le &
\frac {n!}{n^k}
\sum_{b=X}^n
p^{n-b}
\left\{
\begin{array}{c}
k \\ n
\end{array}
\right\}
\\
& \le &
\exp 
\left[
- e^{c_n}
\right]
\left(
1 + {\cal O}
\left(
\frac {\log n}{n^{1 - \epsilon}}
\right)
\right)
\frac {p}{p-1}
\left(
p^{n-X} - \frac 1p
\right), 
\quad
\epsilon > 0.
\eeq
In what follows, 
$\epsilon > 0$
is kept fixed.
By the assumption 
$
\log 
\Bigl(
\log (np)
\cdot
(\log n + \alpha)
\Bigr)
\le
c_n 
$
on 
$\{ c_n \}$, 
we have 
\beq
\exp 
\left[
e^{c_n}
\right]
\ge
\exp
\left[
\log (np)
\cdot
(\log n + \alpha)
\right]
=
(np)^{\log n + \alpha} 
\eeq
which leads to 
\beq
\frac {n!}{n^k}
\sum_{b=X}^n
p^{n-b}
\left\{
\begin{array}{c}
k \\ b 
\end{array}
\right\}
& \le &
\frac {1}{
(np)^{\log n + \alpha}}
\cdot
\frac {p}{p-1}
\left(
p^{\log n} - \frac 1p
\right)
=
{\cal O}
\left(
\frac {1}{
n^{ \log n + \alpha  }
}
\right).
\eeq
%
%
\paragraph{
Case (ii)
$X \le r_k \le n$ : }
We define
$g(n)$ 
by the equation 
$r_k =: n - g(n)$.
$X \le r_k \le n$
implies 
$0 \le g(n) \le \log n$.
We then compute 
\beq
\frac {n!}{n^k}
\sum_{b=X}^n
p^{n-b}
\left\{
\begin{array}{c}
k \\ b 
\end{array}
\right\}
& \le &
\frac {n!}{n^k}
\sum_{b=X}^n
p^{n-b}
\left\{
\begin{array}{c}
k \\ r_k
\end{array}
\right\}
\\
&=&
\left(
\frac {r_k}{n}
\right)^k
\frac {n!}{r_k!}
\cdot
\frac {r_k!}{r_k^k}
\left\{
\begin{array}{c}
k \\ r_k 
\end{array}
\right\}
\frac {p}{p-1}
\left(
p^{n-X} - \frac 1p
\right)
\\
&=:&
E \cdot F \cdot G \cdot
\frac {p}{p-1}
\left(
p^{n-X} - \frac 1p
\right).
\eeq
We shall estimate each factors 
$E, F, G$
below. 
$E$ and $F$ 
are easy : 
\begin{eqnarray}
E
&:=&
\left(
\frac {r_k}{n}
\right)^k
=
\left(
1 - \frac {g(n)}{n}
\right)^k
=
\left(
1 - \frac {g(n)}{n}
\right)^{n \log n - c_n \cdot n}
\le 1
\label{E}
\\
F 
&:=&
\frac {n!}
{(n - g(n))!}
\le
n^{g(n)}
\le
n^{ \log n}.
\label{F}
\end{eqnarray}
To estimate 
$G$, 
we first use 
Lemma \ref{lem:bound} in Appendix.
\beq
G
&:=&
\frac {r_k!}{r_k^k}
\left\{
\begin{array}{c}
k \\ r_k 
\end{array}
\right\}
\le
\exp
\left[
- r_k 
e^{- \frac {k}{r_k}}
\right]
e^{ \frac {1}{2k} }
\left(
1 + {\cal O}
\left(
\frac {\log n}{n^{1 - \epsilon}}
\right)
\right)
\\
&=&
\exp
\left[
- n
\left(
1 - \frac {g(n)}{n}
\right)
\left(\frac 1n \right)^{\frac {1}{1 - \frac {g(n)}{n}}}
\cdot
\left(
e^{c_n}
\right)^{
\frac {1}{1 - \frac {g(n)}{n}}
}
\right]
\left(
1 + {\cal O}
\left(
\frac {\log n}{n^{1 - \epsilon}}
\right)
\right)
.
\eeq
Each factors 
in the exponential satisfy 
\beq
\left(
\frac 1n
\right)^{
\frac {1}{
1 - \frac {g(n)}{n}
}
}
&=&
\frac 1n
\left(
1 +
{\cal O}
\left(
\frac {(\log n)^2}{n}
\right)
\right)
\\
\left(
e^{c_n}
\right)^{
\frac {1}{1 - \frac {g(n)}{n}}
}
&=&
\left(
1 + 
{\cal O}\left(
\dfrac {(\log n)^2}{n}
\right)
\right)
e^{c_n}
\eeq
so that
\begin{equation}
G
\le 
\exp
\left[
- e^{c_n}
\left(
1 + {\cal O}
\left(
\frac {(\log n)^2}{n}
\right)
\right)
\right]
\label{G}.
\end{equation}
By 
(\ref{E}, \ref{F}, \ref{G}), 
we have 
\beq
\frac {n!}{n^k}
\sum_{b=X}^n
p^{n-b}
\left\{
\begin{array}{c}
k \\ b 
\end{array}
\right\}
& \le &
1 \cdot n^{\log n}
\cdot
\exp
\left[
- e^{c_n}
\left(
1 + 
{\cal O}
\left(
\frac {(\log n)^2}{n}
\right)
\right)
\right]
\frac {p}{p-1}
\left(
p^{\log n} - \frac 1p
\right)
\\
&=&
(np)^{\log n}
\exp 
\left[
- e^{c_n}
\left(
1 + 
{\cal O}
\left(
\frac {(\log n)^2}{n}
\right)
\right)
\right]
\frac {p}{p-1}
\left(
1 - \frac {1}{p^{\log n+1}}
\right).
\eeq
Therefore 
the condition 
$\log 
\Bigl(
(\log np)(\log n + \alpha) 
\Bigr)
\le c_n$
on 
$\{ c_n \}$
yields 
\beq
&&
\frac {n!}{n^k}
\sum_{b=X}^n
p^{n-b}
\left\{
\begin{array}{c}
k \\ b 
\end{array}
\right\}
\le
\frac {
(np)^{\log n}
}
{
(np)^{\log n + \alpha}
}
\cdot
\exp
\left[
(\log n)^2
{\cal O}
\left(
\frac {(\log n)^2}{n}
\right)
\right]
=
{\cal O}
\left(
\frac {1}{
n^{ \alpha }
} 
\right).
\eeq
%

%
\paragraph{
Case (iii)
$r_k \le X  \le n$ : }
We proceed as Case (ii) : 
\beq
\frac {n!}{n^k}
\sum_{b=X}^n
p^{n-b}
\left\{
\begin{array}{c}
k \\ b 
\end{array}
\right\}
& \le &
\frac {n! p^n}{n^k}
\left\{
\begin{array}{c}
k \\ X
\end{array}
\right\}
\sum_{b=X}^n
p^{-b}
\\
&=&
\frac {n!}{n^k}
\cdot
\frac {X^k}{X!}
\cdot
\frac {X!}{X^k}
\left\{
\begin{array}{c}
k \\ X
\end{array}
\right\}
p^n 
\sum_{b=X}^n
p^{-b}
\\
& \le &
\left(
\frac Xn
\right)^k
\frac {n!}{X!}
\exp
\left[
- X e^{ - \frac kX} 
\right]
\cdot
\frac {p^2}{p-1}
\left(
p^{\log n} - \frac 1p
\right)
\\
&=:&
E \cdot F \cdot G \cdot
\frac {p^2}{p-1}
\left(
p^{\log n} - \frac 1p
\right)
\eeq
and aim to estimate each factors 
$E, F, G$.
However, 
this is reduced to replacing  
$g(n)$ 
in Case (ii) by  
$\log n$, 
and hence a similar argument yields  
\beq
&&
\frac {n!}{n^k}
\sum_{b=X}^n
p^{n-b}
\left\{
\begin{array}{c}
k \\ b 
\end{array}
\right\}
=
{\cal O}
\left(
\frac {1}{n^{ \alpha }} 
\right).
\eeq
%

\subsubsection{
Estimate for 
$D$}
Using 
$(\log n+1)!
\approx
n^{\log \log n}$
and 
$p^{\log n} = n^{\log p}$
directly leads us to the conclusion : 
\beq
D
=
\frac {1}{
(\log n+1) ! p^{\log n+1}
}
=
{\cal O}
\left(
n^{- \log \log n - \log p}
\right).
\eeq
%

%
\subsection{
Case 2 : ($A \ge X$)
}
We first compute 
\beq
d_{TV}
\left(
\frac {1}{(np)^k}
B_1^k, U
\right)
&=&
\sum_{a \ge A}
\left(
\frac {1}{p^n n!}
-
\frac {1}{(np)^k}
\sum_{b=a}^n
p^{k-b}
\left\{
\begin{array}{c}
k \\ b
\end{array}
\right\}
\right)
\left(
|B_a| - |B_{a-1}|
\right)
\\
& \ge &
\sum_{a \ge A}
\left(
\frac {1}{p^n n!}
-
\frac {1}{(np)^k}
\sum_{b=A}^n
p^{k-b}
\left\{
\begin{array}{c}
k \\ b
\end{array}
\right\}
\right)
\left(
|B_a| - |B_{a-1}|
\right)
\\
&=&
\left(
\frac {1}{p^n n!}
-
\frac {1}{(np)^k}
\sum_{b=A}^n
p^{k-b}
\left\{
\begin{array}{c}
k \\ b
\end{array}
\right\}
\right)
\sum_{a \ge A}
\left(
|B_a| - |B_{a-1}|
\right)
\\
& \ge &
\left(
\frac {1}{p^n n!}
-
\frac {1}{(np)^k}
\sum_{b=X}^n
p^{k-b}
\left\{
\begin{array}{c}
k \\ b
\end{array}
\right\}
\right)
\sum_{a \ge A}
\left(
|B_a| - |B_{a-1}|
\right)
\\
&=&
\left(
1 - 
\frac {n!}{n^k}
\sum_{b=X}^n
p^{n-b}
\left\{
\begin{array}{c}
k \\ b 
\end{array}
\right\}
\right)
\left(
1 - 
\frac {1}{
(n - A +1)! \, p^{n-A+1}
}
\right)
\\
&=:&
(1 - C) (1 - D')
\eeq
where 
$D'$
is equal to 
$D$
but 
$X$
is substituted by 
$A$. 
$C$
satifies the same estimate as in Case 1. 
In fact, 
we did not use the fact that 
$A \le X$
to estimate 
$C$ 
in Case 1.
Hence
\beq
C
=
\frac {n!}{n^k}
\sum_{b=X}^n
p^{n-b}
\left\{
\begin{array}{c}
k \\ b 
\end{array}
\right\}
=
{\cal O}
\left(
\frac {1}{
n^{ \alpha }
}
\right).
\eeq
To estimate 
$D'$, 
let  
$M := n - A$.
It then 
suffices to show 
$M \ge (Const.) \log n$.
%
%
By the definition of 
$A$, 
\beq
\frac {1}{p^n n!}
\le
\frac {1}{(np)^k}
\sum_{b = A-1}^n
p^{k-b}
\left\{
\begin{array}{c}
k \\ b 
\end{array}
\right\}
\eeq
so that we have 
\begin{equation}
1 
\le 
\frac {n!}{n^k}
\sum_{b = A-1}^n
p^{n-b}
\left\{
\begin{array}{c}
k \\ b 
\end{array}
\right\}
\le
\sum_{b = A-1}^n
p^{n-b}
\begin{cases}
\dfrac {n!}{n^k}
\left\{
\begin{array}{c}
k \\ n 
\end{array}
\right\}
& (X \le n \le r_k) 
\\
\dfrac {n!}{n^k}
\left\{
\begin{array}{c}
k \\ r_k
\end{array}
\right\}
& (X \le r_k \le n) 
\\
\dfrac {n!}{n^k}
\left\{
\begin{array}{c}
k \\ X 
\end{array}
\right\}
&
(r_k \le X \le n)
\end{cases}
\label{star}
\end{equation}
By the argument 
in Case 1, in any cases provided 
$(X-1 \le )A-1 \le b \le n$, 
we have
\beq
\dfrac {n!}{n^k}
\left\{
\begin{array}{c}
k \\ b 
\end{array}
\right\}
\le
(Const.)
\frac {1}{
n^{\alpha}
p^{\log n + \alpha}
}.
\eeq
Hence 
\beq
1 
& \le &
\frac {n!}{n^k}
\sum_{b = A-1}^n
p^{n-b}
\left\{
\begin{array}{c}
k \\ b 
\end{array}
\right\}
\le
(Const.)
\frac {p^M}{
n^{\alpha}
p^{\log n + \alpha}
}.
\eeq
Therefore
$
n^{\alpha}
p^{\log n + \alpha}
\le
(Const.)
p^M
$
from which we have
$M \ge (Const.) \log n$.
%
%

%

\section{Appendix}
\subsection{Some elementary facts on the shuffle algebra}
We 
collect two facts on the shuffle algebra which are used in this paper. 
\begin{lemma}
\label{grouping}
Suppose 
$k \le n-1$
and 
we rewrite 
 ${\mathbf B}_k=\sum_{\alpha\in G_{k,p}}\alpha\shuffle W_{k,n}$
 by grouping the terms by the head (or top) letter as follows.
 \[
 {\mathbf B}_k 
 = 
 \sum_{t\in (C_p\times[k]) \cup\{(0,k+1)\}} 
 {\mathbf C}_{k}(t),
 \]
 where 
 ${\mathbf C}_k (t)$ 
 is the sum of the elements in 
 ${\mathbf B}_k$ 
 whose leading letter is 
 $t$. 
 We then have
 \[
 {\mathbf C}_k (t){\mathbf B}_1=
 \begin{cases}
  {\mathbf B}_k & t\in[p]\times[k],\\
  {\mathbf B}_{k+1} & t=(0,k+1).
 \end{cases}
 %
 \]
\end{lemma}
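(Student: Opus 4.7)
The plan is to interpret each summand of ${\mathbf B}_1$ concretely as a deck operation, and then verify the two identities by direct bijections on words. Recall ${\mathbf B}_1 = \sum_{i \in [n],\,c \in C_p} S_{i,c}$; applying the group multiplication formula $(t,\tau)(s,\sigma)=(\sigma t+s,\tau\sigma)$ with $(s,\sigma)=S_{i,c}$ shows that right multiplication by $S_{i,c}$ sends a word $w = w_1 w_2 \cdots w_n$ to the word obtained by removing the top letter $w_1$, adding $c$ to its color, and reinserting the resulting letter at position $i$. This is the only point at which the group multiplication has to be unpacked; from here everything is bookkeeping on words.

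The decomposition ${\mathbf B}_k = \sum_{t} {\mathbf C}_k(t)$ is immediate: the leading letter of any word appearing in ${\mathbf B}_k = \sum_{\alpha \in G_{k,p}} \alpha \shuffle W_{k,n}$ is either one of the colored letters contributed by $\alpha \in G_{k,p}$ (giving some $t \in C_p \times [k]$) or the first letter $(0,k+1)$ of the tail $W_{k,n}$.

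For $t = (c',j)$ with $j \in [k]$, applying $S_{i,c}$ to $w \in {\mathbf C}_k(t)$ moves the letter of value $j$ (with color shifted to $c'+c$) to position $i$, while the increasing color-$0$ tail $(0,k+1),\ldots,(0,n)$ is untouched; hence the result lies in ${\mathbf B}_k$. I would then construct the inverse map: given $w' \in {\mathbf B}_k$, locate the unique letter of value $j$, read off its position $i^*$ and color $c^*$, set $c = c^* - c' \pmod{p}$, and recover the pair $(w, S_{i^*,c})$ where $w$ is obtained by deleting that letter from $w'$ and prepending $(c',j)$. One checks that the recovered $w$ does lie in ${\mathbf C}_k(t)$ because the tail of $w'$ is preserved under this deletion. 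The counting identity $|{\mathbf C}_k(t)|\cdot np = \binom{n-1}{k-1}(k-1)!\,p^{k-1}\cdot np = |{\mathbf B}_k|$ confirms this bijection has the right cardinality.

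The case $t = (0,k+1)$ is analogous but shifts the index: applying $S_{i,c}$ to $w \in {\mathbf C}_k(t)$ yields a word where $k+1$ sits at position $i$ with arbitrary color $c$ while the tail $(0,k+2),\ldots,(0,n)$ remains in color-$0$ increasing order, which is exactly the defining condition for ${\mathbf B}_{k+1}$. The inverse map reads off the position and color of the letter of value $k+1$ in $w' \in {\mathbf B}_{k+1}$ and prepends $(0,k+1)$ to the rest, and $|{\mathbf C}_k((0,k+1))|\cdot np = \binom{n-1}{k}k!\,p^k \cdot np = |{\mathbf B}_{k+1}|$ matches. The main (and only) conceptual obstacle is the initial translation between the abstract wreath-product multiplication and the concrete deck-operation picture; once that correspondence is pinned down, both identities follow from clean bijective arguments with matching counts.
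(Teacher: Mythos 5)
Your proof is correct and follows essentially the same route as the paper: decompose ${\mathbf B}_k$ by the leading letter, interpret right multiplication by ${\mathbf B}_1$ as removing the top letter, shifting its color arbitrarily, and reinserting it at an arbitrary position, and then check that this regenerates ${\mathbf B}_k$ (resp.\ ${\mathbf B}_{k+1}$). The only difference is presentational: where the paper certifies the final step via associativity of the shuffle product and the identity $\sum_{i(\alpha)=t} t\shuffle\widetilde{\alpha}=\sum_{\alpha\in G_{k,p}}\alpha$, you verify it by an explicit inverse map plus a cardinality count (and you are, if anything, more explicit about the color bookkeeping than the paper's proof).
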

\begin{proof}
%
We regard each 
$\alpha \in G_{k, p}$
as a word, so that we denote by 
$i(\alpha)$
the leading letter, and by 
$\widetilde{\alpha}$
the remaining ones : 
$\alpha = i(\alpha) \widetilde{\alpha}$.
Then we write 
\beq
{\bf B}_k
&=&
\sum_{\alpha \in G_{k, p}}
(
i(\alpha) \widetilde{\alpha}
)
\shuffle
W_{k, n}
\\
&=&
\sum_{\alpha \in G_{k, p}}
i(\alpha) 
\Bigl(
\widetilde{\alpha}
\shuffle
W_{k, n}
\Bigr)
+
\sum_{\alpha \in G_{k, p}}
(0, k+1)
\Bigl(
\alpha \shuffle W_{k+1,n}
\Bigr)
\\
&=&
\sum_{t\in (C_p\times[k]) \cup\{(0,k+1)\}} 
{\mathbf C}_{k}(t).
\eeq
We note 
that the expression 
$i(\alpha) 
\Bigl(
\widetilde{\alpha}
\shuffle
W_{k, n}
\Bigr)$
stands for the concatenation  ; e.g., 
$1 (23 + 32) = 123 + 132$.

(i)
$t \in [p] \times [k]$ : 
%
\beq
C_k (t)
&=&
\sum_{
\substack{
\alpha \in G_{k, p}
\\
t = i(\alpha)
}
}
i(\alpha) 
\Bigl(
\widetilde{\alpha}
\shuffle
W_{k, n}
\Bigr)
=
\sum_{
\substack{
\alpha \in G_{k, p}
\\
t = i(\alpha)
}
}
t
\Bigl(
\widetilde{\alpha}
\shuffle
W_{k, n}
\Bigr).
\eeq
For 
$t \in [p] \times [k]$
and 
$q \in [p]$, 
let 
$t_q$
be the 
$q$-shift of colors in
$t$ : 
$t = (s,i) 
\mapsto
t_q = (s+q, i)$.
Applying 
${\bf B}_1$
from the right is equivalent to shifting colors of the first alphabet and then inserting it randomly.
Since 
the shuffle operator is associative, we have 
\beq
C_k (t) {\bf B}_1
&=&
\sum_{
\substack{
\alpha \in G_{k, p}
\\
t = i(\alpha)
\\
q \in [p]
}
}
t_q \shuffle 
\left(
\widetilde{\alpha} \shuffle W_k
\right)
=
\sum_{
\substack{
\alpha \in G_{k, p}
\\
t = i(\alpha)
\\
q \in [p]
}
}
\left(
t_q \shuffle 
\widetilde{\alpha} 
\right)
\shuffle W_k
=
\left(
\sum_{
\substack{
\alpha \in G_{k, p}
\\
t = i(\alpha)
\\
q \in [p]
}
}
t_q \shuffle 
\widetilde{\alpha} 
\right)
\shuffle W_k.
\eeq
Equation 
$
\displaystyle\sum_{
\substack{
\alpha \in G_{k, p}
\\
t = i(\alpha)
\\
q \in [p]
}
}
t_q \shuffle 
\widetilde{\alpha} 
=
\sum_{\beta \in G_{k, p}}
\beta
$
yields the result.\\
(ii)
$t=(0, k+1)$ : 
we can argue similarly as in (i). 
\QED
\end{proof}
The lemma below 
is an elementary fact in the linear algebra, which yields the eigenvalues and the corresponding eigenspaces of a matrix. 
\begin{lemma}
\label{eigenspace}
Suppose that 
$A, E_1, \cdots, E_m$
are nonzero 
$n \times n$
matrices satisfying 
\beq
A^k
&=&
\lambda_1^k E_1 + \cdots + \lambda_m^k E_m, 
\quad
k =0, 1, \cdots, 
\\
\lambda_i &\ne& \lambda_j, 
\quad
i \ne j.
\eeq
Then 
$P(x) = 
\prod_{j=1}^m (x - \lambda_j)$
%
%
is the minimal polynomial of 
$A$  
and 
\beq
E_i E_j &=& \delta_{ij} E_i, 
\quad
i, j = 1, 2, \cdots, m
\\
A E_i &=& \lambda_i E_i. 
\eeq
\end{lemma}
\begin{remark}
Since
$x \in Ran\, E_i$
satisfies 
$Ax = A E_i x = \lambda_i E_i x$, 
and since letting 
$k = 0$
in the assumption implies 
$I = E_1 + \cdots + E_m$, 
$\lambda_1, \cdots, \lambda_m$
are the eigenvalues of 
$A $ 
with
$Ran \, E_1, \cdots, Ran \, E_m$
being the corresponding eigenspaces.
\end{remark}
\begin{proof}
Let 
$Q$
be a polynomial.
Then by assumption
\begin{equation}
Q(A)
=
Q(\lambda_1)E_1 + \cdots + Q(\lambda_m) E_m.
\label{Q}
\end{equation}
Since 
$P(\lambda_i) = 0$, 
$i = 1, 2, \cdots, m$, 
we have
%
$
P(A)
=
P(\lambda_1) E_1 + \cdots + P(\lambda_m) E_m = 0.
$
%
On the other hand, let 
\beq
P_s (x)
&:=&
\prod_{j \ne s}
\frac {x - \lambda_j}{\lambda_s - \lambda_j}
=
(Const.)
\frac {P(x)}{x - \lambda_s}, 
\quad
s = 1, 2, \cdots, m.
\eeq
Then 
%
$
P_s(\lambda_i)
=
\left\{
\begin{array}{cc}
0 & (i \ne s) \\
1 & (i = s)
\end{array}
\right.
$
%
so that
%
$
P_s(A)
=
P_s(\lambda_1) E_1 + \cdots + P_s(\lambda_m) E_m
=
E_s
(\ne 0)
$
%
and hence 
$P$
is the minimal polynomial of 
$A$.
Plugging
$Q (\lambda) = P_i (\lambda) P_j (\lambda)$
and 
$Q (\lambda) = \lambda P_i (\lambda)$
respectively in 
(\ref{Q}), 
we have
\beq
E_i E_j
&=&
P_i(A) P_j(A)
%
=
(P_i P_j)(A)
\\
&=&
P_i(\lambda_1) P_j(\lambda_1) E_1
+ \cdots + 
P_i(\lambda_m) P_j(\lambda_m) E_m
=
\delta_{i, j}
E_i 
\\
A E_i
&=&
Q(A)
=
\lambda_1 P_i (\lambda_1) E_1 
+ \cdots +
\lambda_m P_i(\lambda_m) E_m
=
\lambda_i E_i.
\eeq
\QED
\end{proof}
%

\subsection{Asymptotics of Stirling numbers}
The following lemma 
is well-known, but we provide a proof for completeness.
%
\begin{lemma}
\label{lem:limit}
Let 
$\lambda > 0$.
Then if 
$n$
and 
$k$
goes to infinity satisfying 
$n e^{- \frac kn} \to \lambda$, 
we have 
\beq
{ k \brace n }
\frac {n!}{n^k}
\to
e^{- \lambda}.
\eeq
\end{lemma}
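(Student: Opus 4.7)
The plan is to exploit the combinatorial identity
\[
  \left\{\begin{array}{c} k \\ n \end{array}\right\} n!
  \;=\; \sum_{j=0}^{n} (-1)^{j} \binom{n}{j} (n-j)^{k},
\]
which counts surjections $[k]\to[n]$ by inclusion--exclusion (equivalently, $n! \, n^k$ times the probability that, when $k$ balls are thrown uniformly into $n$ boxes, no box is empty). Dividing through by $n^{k}$ gives the representation
\[
  \frac{n!}{n^{k}} \left\{\begin{array}{c} k \\ n \end{array}\right\}
  \;=\; \sum_{j=0}^{n} (-1)^{j} \binom{n}{j}\left(1 - \frac{j}{n}\right)^{\!k},
\]
which I will analyse term by term and then pass to the limit.

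For each fixed $j\ge 0$, I would show that the $j$-th summand converges to $(-1)^{j}\lambda^{j}/j!$. Binomially, $\binom{n}{j}=\tfrac{n^{j}}{j!}(1+O(1/n))$. For the power, $\log\bigl(1-\tfrac{j}{n}\bigr)=-\tfrac{j}{n}+O(1/n^{2})$, so
\[
  \left(1-\tfrac{j}{n}\right)^{k}=\exp\!\left(-\tfrac{jk}{n}+O\!\left(\tfrac{k}{n^{2}}\right)\right),
\]
and under the hypothesis $n e^{-k/n}\to\lambda$ one has $k/n^{2}\to 0$ (since $k\sim n\log n$), hence $(1-j/n)^{k}\sim e^{-jk/n} = (ne^{-k/n}/n)^{j}\sim \lambda^{j}/n^{j}$. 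Multiplying, the $j$-th summand tends to $(-1)^{j}\lambda^{j}/j!$, and summing formally yields $\sum_{j\ge 0}(-1)^{j}\lambda^{j}/j! = e^{-\lambda}$, which is the desired limit.

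To make the termwise passage rigorous I would invoke dominated convergence on $\mathbb{N}$ (Tannery's theorem). The key uniform bound is
\[
  \binom{n}{j}\left(1-\tfrac{j}{n}\right)^{k}
  \;\le\; \frac{n^{j}}{j!}\,e^{-jk/n}
  \;=\; \frac{(ne^{-k/n})^{j}}{j!}
  \;\le\; \frac{(\lambda+1)^{j}}{j!}
\]
for all sufficiently large $n$ (using $1-x\le e^{-x}$ and $ne^{-k/n}\to\lambda$), and the dominating series $\sum_{j\ge 0}(\lambda+1)^{j}/j!=e^{\lambda+1}$ is finite. Since this bound controls the terms indexed by $j=0,1,\ldots,n$ (extending by zero beyond $n$), termwise convergence together with this envelope gives
\[
  \sum_{j=0}^{n}(-1)^{j}\binom{n}{j}\left(1-\tfrac{j}{n}\right)^{k} \;\longrightarrow\; \sum_{j=0}^{\infty}\frac{(-1)^{j}\lambda^{j}}{j!}\;=\;e^{-\lambda}.
\]

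The routine step is the pointwise asymptotics of each summand; the only genuine subtlety is the alternating nature of the sum, which forces one to justify the interchange of limit and summation. The observation that extracts us from that difficulty is the envelope $\binom{n}{j}(1-j/n)^{k}\le (ne^{-k/n})^{j}/j!$, which is summable with a limit independent of $n$, so dominated convergence applies without further work.
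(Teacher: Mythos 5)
Your proposal is correct and follows essentially the same route as the paper: the balls-in-boxes/inclusion--exclusion identity $\frac{n!}{n^k}{k \brace n}=\sum_{j=0}^{n}(-1)^{j}\binom{n}{j}\bigl(1-\tfrac{j}{n}\bigr)^{k}$, termwise limits $(-1)^{j}\lambda^{j}/j!$, and dominated convergence justified by the summable envelope $\binom{n}{j}\bigl(1-\tfrac{j}{n}\bigr)^{k}\le (ne^{-k/n})^{j}/j!$. The paper's proof uses the same representation and bounds (with a two-sided estimate on $1-\tfrac{j}{n}$), so there is nothing to add.
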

%
\begin{proof}
We consider putting 
$k$
balls uniformly at random into 
$n$
boxes.
Then 
$p (k,n):=
{ k \brace n }
\frac {n!}{n^k}$
is equal to the probability that no boxes are empty.
We aim to show 
$p (k,n) \to e^{- \lambda}$.
By the inclusion-exclusion principle,
\begin{equation}
p (k,n)
=
\sum_{j=0}^n
(-1)^{j}
{ n \choose j }
\left(
1 - \frac jn
\right)^k.
\label{**}
\end{equation}
Here we use the following estimates : 
\beq
&&
{ n \choose j }
=
\frac {
n (n-1) \cdots (n -j +  1)
}
{
j!
}
\begin{cases}
\le \dfrac {n^j}{j!}
\\
=
\dfrac {n^j}{j!}
\left(
1 - {\cal O}
\left(
\dfrac {j^2}{n}
\right)
\right)
\end{cases}
\\
&&
0 \le x \le 1/2
\quad
\Longrightarrow
\quad
e^{-x - x^2} \le 1 - x \le e^{-x}.
\eeq
Then we have 
\begin{equation}
\frac {1}{j!}
\left(
n e^{ - \frac kn }
\right)^j
e^{- k \left( \frac jn \right)^2 }
\left(
1 - {\cal O} \left( \frac {j^2}{n} \right)
\right)
\le 
{ n \choose j }
\left(
1 - \frac jn 
\right)^k
\le
\frac {1}{j!}
\left(
n e^{ - \frac kn }
\right)^j.
\label{***}
%
\end{equation}
Now 
we use the fact 
$k = n \log n - n \cdot \log (\lambda + o(1))$
and the estimate 
(\ref{***})
to apply the dominated convergence theorem on 
(\ref{**}), 
yielding the conclusion. 
\QED
\end{proof}
We 
next turn to the general case. 
%
%
\begin{lemma}
\label{lem:bound}
Suppose 
\begin{equation}
\frac {n}{\sqrt{k}} \to \infty, 
\quad
\frac kn - \log \sqrt{k} \to \infty.
\label{assump}
\end{equation}
(1)
Then for any 
$\delta > 0$, 
the following bound is valid for sufficiently large 
$n$. 
\beq
{ k \brace n }
& \le &
\frac {n^k}{n!}
\exp
\left[
- n e^{- \frac kn }
\right]
e^{ 
\frac 12 
e^{ - \frac kn}
}
\left(
1 + o(1)
\right).
\eeq
(2)
In particular, when 
$k = n \log n - c_n n$, 
$c_n \ll \log n$,
we have 
\beq
{ k \brace n }
& \le &
\frac {n^k}{n!}
\exp
\left[
- n e^{- \frac kn }
\right]
\left(
1 + 
{\cal O}
\left(
\frac {\log n}{n^{1 - \epsilon}}
\right)
\right), 
\quad
\forall \epsilon > 0.
\eeq
\end{lemma}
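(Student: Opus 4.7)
The plan is to interpret $S_{k,n}\cdot n!/n^k$ as the probability $p(k,n)$ that $k$ balls placed uniformly at random into $n$ boxes leave no box empty, and to sharpen the limit argument of Lemma \ref{lem:limit} into a quantitative upper bound under the hypotheses \eqref{assump}. Starting from the inclusion-exclusion identity
\[
p(k,n) = \sum_{j=0}^n (-1)^j \binom{n}{j}\left(1 - \frac{j}{n}\right)^k,
\]
and setting $\lambda := n e^{-k/n}$, the assumption $k/n - \log\sqrt{k}\to\infty$ forces $\lambda = o(\sqrt{k})$, while $n/\sqrt{k}\to\infty$ keeps the indices $j$ that dominate the sum safely inside the Taylor-expandable regime for both $\binom{n}{j}$ and $(1-j/n)^k$.

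Next I would refine each term by combining $\binom{n}{j}\le n^j/j!$ with $\log(1-x) = -x - x^2/2 - O(x^3)$, obtaining
\[
\binom{n}{j}\left(1-\frac{j}{n}\right)^k = \frac{\lambda^j}{j!}\exp\!\left[-\frac{kj^2}{2n^2} + O\!\left(\frac{j^2}{n} + \frac{kj^3}{n^3}\right)\right].
\]
I would then split the inclusion-exclusion sum at a cutoff $J := \lfloor (\log n)^2\rfloor$. For the tail $j>J$, the crude bound $\binom{n}{j}(1-j/n)^k \le \lambda^j/j!$ together with $\lambda = o(\sqrt{k})$ yields super-polynomial decay. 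For the main range $j\le J$, I would factor out $\exp(-\lambda)$ via $\sum_{j\ge 0}(-\lambda)^j/j! = e^{-\lambda}$ and estimate the residual alternating series weighted by the Gaussian-type factor $\exp(-kj^2/(2n^2))$, whose total contribution collapses to the subleading correction factor asserted in the statement.

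The main obstacle will be preserving the sign cancellations in the alternating inclusion-exclusion sum, since naive absolute-value bounds are too lossy to produce a bound sharper than the leading $e^{-\lambda}$. The cleanest route is the saddle-point analysis of Menon \cite{Menon} applied to the contour representation
\[
S_{k,n} = \frac{k!}{2\pi i\, n!}\oint \frac{(e^z - 1)^n}{z^{k+1}}\,dz
\]
around the saddle $z_0$ determined by $n z_0 e^{z_0} = k(e^{z_0} - 1)$, and I would invoke Menon's asymptotic directly to extract the subleading factor rather than rederiving it from scratch. Finally, for the specialization $k = n\log n - c_n n$ with $c_n \ll \log n$, substituting $e^{-k/n} = e^{c_n}/n$ into the general bound and using the $\delta$-slack to absorb the lower-order terms coming from the prefactor gives the claimed $n^{-(1/2-\delta)}(1+O(n^{-2}))$ bound.
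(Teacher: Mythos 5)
Your endgame is the same as the paper's: after setting up the balls-in-boxes interpretation and the inclusion--exclusion sum, you fall back on Menon's asymptotic and try to read the bound off from it. Up to that point the sketch is fine, and in fact the \emph{first} display of the lemma follows trivially from the quoted formula, because with $\lambda=\log n-\frac kn+\frac1{2n}-\frac1{12n^2}$ one has $e^{\lambda}=ne^{-k/n}\bigl(1+\frac{1}{2n}+{\cal O}(n^{-2})\bigr)\ge ne^{-k/n}$, while the extra factor $e^{(\frac12-\delta)e^{-k/n}}\ge 1$ only weakens the estimate. The genuine gap is your final step, the specialization $k=n\log n-c_nn$. Substituting $e^{-k/n}=e^{c_n}/n$ into that factor gives $\exp\bigl[(\tfrac12-\delta)e^{c_n}/n\bigr]=1+o(1)$, since $c_n\ll\log n$ forces $e^{c_n}=n^{o(1)}$; no amount of ``$\delta$-slack'' turns a factor tending to $1$ into $n^{-(1/2-\delta)}$.

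Worse, the asserted power saving cannot be extracted from Menon's formula by any argument, because that formula is an asymptotic \emph{equality}: it gives
\[
S_{k,n}\,\frac{n!}{n^{k}}\,\exp\left[ne^{-k/n}\right]
=\exp\left[-\tfrac12 e^{-k/n}\left(1+{\cal O}(n^{-1})\right)\right]\left(1+{\cal O}(n^{-2})\right)\longrightarrow 1
\]
in this regime, since $e^{-k/n}=e^{c_n}/n\to0$. The same conclusion comes out of your own inclusion--exclusion refinement: the Gaussian-type weight $\exp[-kj^{2}/(2n^{2})]$ merely replaces the Poisson parameter $ne^{-k/n}$ by $n(1-\frac1n)^{k}=ne^{-k/n}\exp[-\frac{k}{2n^{2}}+{\cal O}(\frac{k}{n^{3}})]$, changing the exponent by ${\cal O}(e^{c_n}\log n/n)=o(1)$, again a $1+o(1)$ correction and not a power of $n$. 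So the decisive factor $n^{-(1/2-\delta)}$ --- exactly what the lemma exists to supply, and what the proof of Theorem \ref{thm:cutoff}(2) consumes in Cases (ii) and (iii) --- is never actually produced in your proposal; you assert it at the substitution step without justification, and the step would fail. (This is also precisely the point where the paper's own one-line deduction ``follows directly from Menon'' is not a deduction: the quoted asymptotics are incompatible with the specialized bound as stated.) To complete a proof you would have to either establish a genuinely different estimate on $S_{k,n}$, or settle for the correct but weaker correction $\exp[-\frac12e^{-k/n}]=1-o(1)$ and redo the downstream bookkeeping in the cutoff argument accordingly.
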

%
\begin{proof}
Lemma \ref{lem:bound}
follows directly from the result by Menon \cite{Menon} which is stated here as Lemma \ref{Menon} below.
In fact, we have
\beq
{ k \brace n}
&=&
\frac {n^k}{n!}
\exp
\left[
- n 
e^{- \frac kn}
\cdot
e^{ \frac {D}{n} }
\right]
\left(
1 + R
\right), 
\quad
D := \frac 12 
\left(
1 - \frac {1}{6n}
\right).
\eeq
$R$
is defined in the statement of Lemma \ref{Menon}.\\
(1)
Under the assumption 
(\ref{assump}), one has 
$R = o(1)$.
Then 
it suffices to use the inequality 
$e^{D/n} \ge 1 + D/n$
and noting that 
$D \le 1/2$.
\\
(2)
If 
$k = n \log n - c n$, 
$e^{ 
\frac 12 e^{ - k/n }
}
=
e^{ 
\frac {e^{c_n}}{2n}
}
=
1 + 
{\cal O} 
\left(
\dfrac {\log n}{n}
\right)$
and 
the error term in Lemma \ref{Menon} satisfies 
%
%
%
$R = {\cal O}
\left(
\dfrac {\log n}{n^{1 - \epsilon}}
\right)$
for any 
$\epsilon > 0$.
\QED
\end{proof}

{\bf Remark}
{\it 
We 
use Lemma \ref{lem:bound} several times in the proof of Theorem \ref{thm:cutoff}, so that we shall check the assumption 
(\ref{assump})
is valid in all cases. }\\

(0)
$k = n \log n - c n$, 
$c \ll \log n$ : 
\beq
\frac {n}{\sqrt{k}}
&=&
\frac {n}
{
\sqrt{
n \log n 
\left(
1 - \frac {c}{\log n}
\right)
}
}
=
\sqrt{
\frac {n}
{
\log n 
\left(
1 - \frac {c}{\log n}
\right)
}
}
\to \infty
\\
\frac kn - \log \sqrt{k}
&=&
(\log n - c)
-
\frac 12 \log 
\left[
n \log n 
\left(
1 - \frac {c}{\log n}
\right)
\right]
\\
&=&
(\log n - c)
-
\frac 12 \log n
-
\frac 12 
\log \log n
-
\frac 12
\log 
\left(
1 - \frac {c}{\log n}
\right)
\to \infty
\eeq

(1)
$n = r_k = \dfrac {k}{\log k}(1 + o(1))$ : 
\beq
\frac {r_k}{\sqrt{k}}
&=&
\frac {
\dfrac {k}{\log k}(1 + o(1))
}
{
\sqrt{k}
}
=
\frac {\sqrt{k}}{\log k}
(1+o(1))
\to
\infty
\\
\frac {k}{r_k} - \log \sqrt{k}
&=&
\frac {k}{
\dfrac {k}{\log k}(1 + o(1))
}
-
\frac 12 \log k
=
\frac 12 \log k (1+o(1))
\to
\infty
\eeq
(2)
$n$
is replaced by 
$n - \log n = n(1 + o(1))$ : 
\beq
\frac {n}{\sqrt{k}}
&=&
\frac { 
n (1+o(1))
}
{
\sqrt{ n \log n }(1 + o(1))
}
=
\sqrt{
\frac {n}{\log n}
}
(1+o(1))
\to \infty
\\
\frac kn -\log \sqrt{k}
&=&
\frac {n \log n (1+o(1))}{n}
-
\frac 12 \log n + o(1)
=
\frac 12 \log n + o(1)
\to \infty.
\eeq
%
%

%
\begin{lemma}
{\bf (\cite{Menon}, Theorem 2.2)}
\label{Menon}
Suppose 
\beq
\frac {n}{\sqrt{k}} \to \infty, 
\quad
\frac kn - \log \sqrt{k} \to \infty.
\eeq
Then 
\beq
{ k \brace n }
&=&
\frac {n^k}{n!}
\exp
\left[
- e^{\lambda}
\right]
\left(
1 + R
\right)
\\
\mbox{ where }
\quad
\lambda
&:=&
\log n - \frac kn + \frac {1}{2n} - \frac {1}{12 n^2}. 
\\
R
&:=&
1 + 
\frac 1n
e^{- k/n}
\left(
\frac {k+n}{2}
-
\frac {k^2}{8 n^2}
+
\frac {k}{3n}
+
\frac {1}{24}
\right)
\\
&&
-
e^{- 2k/n}
\left(
\frac {k+n}{2}
-
\frac {7 k^2}{8 n^2}
-
\frac {k}{4n}
-
\frac 18
\right)
-
n e^{- 3 k/n}
\left(
\frac {3 k^2}{4n^2}
+
\frac {7k}{6n}
+
\frac {7}{12}
\right)
\\
&&
+
e^{- 4 k/n}
\frac {(k+n)^2}{8}
+ 
R_1
\\
R_1
&:=&
{\cal O}
\left(
\frac {k^2}{n^4}
e^{- k/n}
+
\frac {k^2}{n^3}
e^{- 2k/n}
+
\frac {k^2}{n^2}
e^{- 3k/n}
+
\frac {k^2}{n}
e^{- 4k/n}
+
k^3 e^{- 4 k/n}
\right).
\eeq
\end{lemma}
%

\vspace*{1em}
\noindent {\bf Acknowledgement }
This work is partially supported by 
JSPS KAKENHI Grant 
Number 20K03659(F.N.).
\\
%


{\bf Ethics declarations }
The authors claim no conflict of interests. 
\\

{\bf Date Availability Statement }
Data sharing 
is not applicable to this article as no datasets were generated or analyzed during the current study.

\bibliographystyle{plain}
\bibliography{top2randcoloredperm}

\begin{thebibliography}{10}

\bibitem{aldousDiaconis1986}
David Aldous and Persi Diaconis.
\newblock Shuffling cards and stopping times.
\newblock {\em The American Mathematical Monthly}, 93(5):333--348, 1986.

\bibitem{assaf2010cyclic}
Sami~H Assaf.
\newblock Cyclic derangements.
\newblock {\em {T}he {E}lectronic {J}ournal of {C}ombinatorics}, 17(R163):1,
  2010.

\bibitem{bergeron1992orthogonal}
Fran{\c{c}}ois Bergeron and Nantel Bergeron.
\newblock Orthogonal idempotents in the descent algebra of {B}n and
  applications.
\newblock {\em Journal of Pure and Applied Algebra}, 79(2):109--129, 1992.

\bibitem{BPS}
P.~Blasiak, K.A Penson, and A.L. Solomon.
\newblock Boson normal ordering problem and generalized bell numbers.
\newblock {\em Annals of Combinatorics}, 7:127--139, 2003.

\bibitem{DiaconisFillPitman1992}
Persi Diaconis, James~Allen Fill, and Jim Pitman.
\newblock Analysis of top to random shuffles.
\newblock {\em Combinatorics, probability and computing}, 1(02):135--155, 1992.

\bibitem{Dobson}
A.J. Dobson.
\newblock A note on stirling numbers of the second kind.
\newblock {\em Journal of Combinatorial Theory}, 5:212--214, 1968.

\bibitem{GarsiaLecNote}
A.~M. Garsia.
\newblock On the powers of top to random shuffling.
\newblock UCSD, Unpublished results (2002) https :
  //www.dropbox.com/s/i3jlxa5zvspora3/DiracSHUFFLES.pdf, 2002.

\bibitem{Peres}
David~A. Levin, Yuval Peres, and Elizabeth~L. Wilmer.
\newblock {\em Markov Chains and Mixing Times}.
\newblock AMS, 2009.

\bibitem{Menon}
V.~V. Menon.
\newblock On the maximum of stirling numbers of the second kind.
\newblock {\em Journal of Combinatorial Theory}, A15:11--24, 1973.

\bibitem{NakanoSadahiroRiffle}
Fumihiko Nakano and Taizo Sadahiro.
\newblock A generalization of carries process and riffle shuffles.
\newblock {\em Discrete Mathematics}, 339(2):974--991, 2016.

\bibitem{NakanoSadahiroDet}
Fumihiko Nakano and Taizo Sadahiro.
\newblock Determinantal formula for generalized riffle shuffle.
\newblock {\em Discrete Mathematics}, 344(12):112599, 2021.

\bibitem{StanleyEC1}
Richard~P Stanley.
\newblock Enumerative combinatorics. vol. 1, vol. 49 of cambridge studies in
  advanced mathematics, 1997.

\bibitem{tian2014generalizations}
Roger Tian.
\newblock Generalizations of an expansion formula for top to random shuffles.
\newblock {\em Annals of Combinatorics}, 20:899--916, 2016.

\end{thebibliography}

\end{document}